\documentclass{article}
\usepackage[a4paper,margin=1in]{geometry}
\usepackage{amsmath, amssymb, amsthm, hyperref,tikz,pgfplots}
\pgfplotsset{compat=1.18}
\usetikzlibrary{arrows.meta, decorations.markings, patterns}
\usepackage{authblk}
\DeclareMathOperator{\csch}{csch}

\newtheorem{theorem}{Theorem}[section]
\newtheorem{proposition}[theorem]{Proposition}
\newtheorem{lemma}{Lemma}[section]
\newtheorem{corollary}{Corollary}[section]

\theoremstyle{definition}
\newtheorem{definition}{Definition}[section]

\theoremstyle{remark}
\newtheorem{remark}{Remark}[section]
\author[1,2]{Mohammad Aqib\thanks{mohammadaqib@hri.res.in, aqibm449@gmail.com}}
\author[1,2]{Hemangi Madhusudan Shah\thanks{hemangimshah@hri.res.in}}

\affil[1]{Harish-Chandra Research Institute, Chhatnag Road, Jhunsi, Prayagraj-211019, India}
\affil[2]{Homi Bhabha National Institute, Training School Complex, Anushakti Nagar, Mumbai 400094, India}

\begin{document}
\title{Mixed Killing Vector Fields on Cigar Ricci-Bourguignon Solitons}
\date{}
\maketitle
\begin{abstract}
In this article, we study mixed Killing vector fields, defined by the condition 
$L_V L_V g = f\, L_V g$, on Cigar Ricci--Bourguignon solitons. 
While conformal vector fields are always mixed Killing, the converse fails in flat and open cylinders with base as manifold geometries, where the mixed Killing class is 
infinite-dimensional.  We establish a rigidity phenomenon of the Cigar Ricci--Bourguignon solitons: any complete steady almost gradient  Ricci--Bourguignon soliton on a surface with positive curvature is, up to homothety, Hamilton’s Cigar soliton. 
We then characterise complete mixed Killing fields, and affirm that locally any mixed Killing field is the sum of a rotationally Killing field and a mixed Killing radial field. Finally, we establish that the dimension of the vector space of complete mixed Killing fields of Cigar Ricci--Bourguignon solitons
is $5$. Moreover, we explicitly find its basis.
Our results show that Cigar Ricci--Bourguignon solitons depict 
completely different behaviour in contrast to Euclidean space.
Finally,  we also 
provide a complete description of the geodesic structure 
of  Cigar Ricci--Bourguignon solitons. 
\end{abstract}

%%%%%%%%%%%%%%%%%%%%%%%%%%%%%%%%%%%%%%%%%
\textbf {M. S. C. 2020:} 53B20, 53B25, 53C18, 53C35.\\
	%%%%%%%%%%%%%%%%%%%%%%%%%%%%%%%%%%%%%%%%%%%%%%%%%%%%%%%%%%%%%%%%%%%%
	
	%%%%%%%%%%%%%%%%%%%%%%%%%%%%%%%%%%%%%%%%%%%%%%%%%%%%%%%%%%%%%%%%%%%%
	\noindent
	\textbf{Keywords:  Ricci-Bourguignon Solitons; Conformal Vector Fields;  Mixed Killing Fields; Cigar Solitons, Killing Fields}

%%%%%%%%%%%%%%%%%%%%%%%%%%%%%%%%%%%%%%%%%%%%%%%%%%%%%%
\section{Introduction}

Geometric evolution equations have become central tools in differential 
geometry since Hamilton introduced the Ricci flow
\[
\frac{\partial g}{\partial t} = -2\operatorname{Ric}(g)
\]
\cite{Hamilton1988}. Self-similar solutions of Ricci flow, known as 
Ricci solitons satisfy
\[
\operatorname{Ric} + \frac{1}{2} L_{V} g  = \lambda g,
\]
where $V$ is a potential vector field and  $L_V$ 
denotes the Lie derivative with respect to $V$,
and play a fundamental role in singularity analysis and geometric 
classification.
A natural generalisation of Ricci flow  is the Ricci--Bourguignon flow
\[
\frac{\partial g}{\partial t}
= -2(\operatorname{Ric} - \rho R g),
\]
introduced in \cite{CCD17}. Its self-similar solutions (whenever they exist)
Ricci--Bourguignon solitons,  interpolate between the Ricci flow 
($\rho=0$) and the other curvature-driven evolutions. 
Allowing the soliton constant to vary leads to almost 
Ricci--Bourguignon solitons \cite{SDRBS}, 
which generalise almost Ricci solitons and arise naturally 
in warped product and Einstein-type geometries.
Mathematically, Ricci-Bourguignon  almost solitons are 
solutions of 
\[
\operatorname{Ric} + \frac{1}{2}\mathcal{L}_\xi g = (\lambda + \rho R)g,
\]
with a potential vector field $\xi$, soliton function $\lambda$, $\rho$ 
an arbitrary constant and $R$ scalar curvature.  \\

One of the prime examples of  Ricci solitons is the two-dimensional Hamilton's Cigar soliton.  
This is the unique complete, non-compact, steady gradient Ricci soliton 
with positive curvature \cite{Hamilton1988}.  The soliton data is   
\[
\left(\mathbb{R}^2,\;
g_{\mathrm{Cigar}} =\frac{dx^2+dy^2}{1+x^2+y^2},\;
  \xi=-2\Bigl(x\frac{\partial}{\partial x}
             +y\frac{\partial}{\partial y}\Bigr),\;
  \lambda=0\right).
\]
Here
\begin{equation}
  \xi := -2 \left(x\frac{\partial}{\partial x}
                     +y\frac{\partial}{\partial y}\right) = \nabla f, \\  \nonumber \;
 \mbox{where} \;  f(x,y) :=  - \log\bigl(1+x^2+y^2\bigr).                        
\end{equation}
It is conformally flat, asymptotically cylindrical, and has Gaussian curvature decaying exponentially at infinity. \\
The Cigar Ricci-Bourguignon  soliton was described in 
\cite{aqib2025characterizations}, and the soliton data is described as follows:
\[
\left(\mathbb{R}^2,\;
g_{\mathrm{Cigar-RB}}(t) =\frac{dx^2+dy^2}{E(t)+x^2+y^2},\;
  \xi=-2 (1-2 \rho) \Bigl(x\frac{\partial}{\partial x}
             +y\frac{\partial}{\partial y}\Bigr),\;
  \lambda=0\right).
\]
Here
\begin{equation}\label{xi-Cigar}
  \xi := -2 (1-2\rho) \left(x\frac{\partial}{\partial x}
                     +y\frac{\partial}{\partial y}\right) = \nabla f, \\  \nonumber \;
 \mbox{where} \;  f(x,y, t) :=  - (1-2 \rho)\log\bigl(E(t)+x^2+y^2\bigr)                
\end{equation}
and $E(t)=e^{4(1-2\rho)t}$.
It satisfies
\[
\operatorname{Ric} + \nabla^2 f - \rho R g = 0
\]
for all $t$. Thus $(\mathbb{R}^2, g_{\mathrm{Cigar-RB}}(t),\xi,\lambda\equiv0,\rho)$ is a steady 
gradient almost Ricci--Bourguignon soliton. 
See Section \ref{sec:Cigar-RB}  for more details on Cigar 
Ricci-Bourguignon almost solitons.\\

 Cigar Ricci-Bourguignon almost solitons are characterised by our Theorem  \ref{2D rigidity}: {\it Any complete surface 
admitting a steady almost gradient Ricci Bourguignon soliton with positive Gaussian curvature
 is isometric, up to scaling, to Hamilton's Cigar soliton}. The potential vector fields (the family of potential vector fields 
 depending on $\rho$) of Cigar gradient almost Ricci-Bourguignon solitons satisfy a remarkable property, an iterated Lie-derivative relation: its second Lie derivative of the metric is proportional to the first. See Proposition \ref{pro:mixed-killing-Cigar} for the derivation. This observation motivates the study of 
a class of vector fields, so-called \emph{mixed Killing} vector fields.\\

\indent
A vector field $V$ on a Riemannian manifold $(M,g)$ is called 
\emph{mixed Killing} if,
\[
L_V L_V g = f\, L_V g
\]
for some smooth function $f$. 
This notion, introduced in \cite{ghosh2025mixed}, generalizes 
2-Killing vector fields (where $f \equiv 0$, see \cite{Oprea2008}) 
and interpolates between Killing vector fields ($L_V g = 0$) and 
conformal vector fields   ($L_V g = \lambda g$, $\lambda$ is a function). 
Every conformal vector field is mixed Killing, whereas in flat 
and product geometries, the mixed Killing class is infinite-dimensional. See subsection 
\ref{sec:MK-Cigar} for further discussion.
It is therefore natural to ask whether curvature imposes rigidity on mixed Killing fields.\\\\
\indent
This paper aims to investigate mixed Killing vector fields 
on complete Cigar Ricci--Bourguignon solitons and to establish 
rigidity phenomena specific to this geometry. We establish a 
complete characterisation of mixed Killing vector fields. 
In particular, we show that every smooth angular mixed Killing 
field is a rotational Killing field. See Corollary \ref{mkc} for more details. The main result about complete mixed Killing vector fields on Cigar Ricci--Bourguignon solitons is that this space
$\mathfrak{MK}(g)$ has dimension $5$. See Theorem \ref{bmkv}. \\

We also provide an explicit description of the geodesics of 
Cigar Ricci--Bourguignon solitons and show that
all geodesics (except those starting at 
the tip) escape to infinity. This behaviour of the geodesics 
reflects the fact that any Cigar Ricci--Bourguignon soliton
is a complete non-compact surface. See  Section \ref{geocig} for more details.\\
 
 The article is divided into four sections.  In Section \ref{MKF}, 
we describe mixed Killing vector fields in flat and product manifolds.  
In Section \ref{sec:Cigar-RB}, we obtain rigidity of Cigar 
almost Ricci-Bourguignon solitons  (Theorem \ref{2D rigidity}). We also
show that the potential vector field of  Cigar almost Ricci-Bourguignon solitons
is a mixed Killing vector field.  In this section, we also obtain a basis and dimension for complete mixed Killing vector fields in Cigar almost Ricci-Bourguignon solitons, Theorem \ref{bmkv}.
In the final section, viz., Section \ref{geocig}, we analyse geodesics of Cigar almost Ricci-Bourguignon solitons.   

%%%%%%%%%%%%%%%%%%%%%%%%%%%%%%%%%%%%%%%%%%%%%%%%%%%%%%%%%%%%%%%%%%%%%%%%%

\section{Mixed Killing vector fields in flat and product manifolds}\label{MKF}
In this section,  we  first describe  mixed Killing vector fields on a 
Riemannian manifold.  This was introduced by Ghosh \cite{ghosh2025mixed}
as a natural generalisation of Killing and conformal vector fields. 
In subsection \ref{sec:MK-Cigar},  we show that mixed Killing vector fields in an open cylinder with base as manifolds, in particular flat manifolds, have infinite dimension. 

\begin{definition}[Mixed Killing Vector Field]\label{def:MK}
A vector field $V$ on a (semi) Riemannian manifold $(M,g)$ is said to be a 
{\it mixed Killing vector field}, if there exists a smooth function $f$ on $M$ 
such that
\begin{equation}\label{e1}
    L_V L_V g = f\,L_V g.
\end{equation}
We call $f$ the {\it mixed Killing factor} of $V$.
\end{definition}

Note that if $L_V g \equiv 0$ (i.e., $V$ is a Killing field), then $V$ is trivially 
mixed Killing with any function $f$. The interesting case is when $L_V g \neq 0$, 
in which case condition \eqref{e1} states that the second Lie derivative 
of the metric along $V$ is proportional to the first. This condition is weaker 
than being conformal but stronger than having no geometric constraint.\\

It is easy to see that conformal fields form a subclass of mixed Killing fields.

\begin{lemma}[Conformal fields are mixed Killing]\label{lem:conformal-MK-main}
Let $(M,g)$ be a (semi) Riemannian manifold and $V$ a conformal vector field 
with  $L_V g = 2\lambda\,g$  for some smooth function $\lambda$. Then on any open set where $L_V g \neq 0$, 
$V$ is a mixed Killing vector field with mixed Killing factor
\[
f = \frac{V(\lambda)}{\lambda} + 2\lambda.
\]
Consequently, (i) if $\lambda$ is constant (i.e., $V$ is homothetic), then $f \equiv 2\lambda$. And
(ii) if $V(\lambda) = 0$ (i.e. $\lambda$ is constant along flow lines of $V$), 
then $f = 2\lambda$.
\end{lemma}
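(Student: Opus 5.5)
The plan is to compute $L_V L_V g$ directly from $L_V g = 2\lambda g$, using only that $L_V$ is a derivation on tensor fields and acts on functions as $V$. Applying $L_V$ to both sides of $L_V g = 2\lambda g$ and using the Leibniz rule gives
\[
L_V L_V g = L_V(2\lambda g) = 2V(\lambda)\,g + 2\lambda\,L_V g = 2V(\lambda)\,g + 4\lambda^2 g .
\]
The task is then to rewrite the right-hand side as a function multiple of $L_V g = 2\lambda g$.

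This is where the hypothesis $L_V g \neq 0$ on the open set enters: it means $\lambda \neq 0$ pointwise there, so we may divide by $\lambda$. We write $g = \frac{1}{2\lambda} L_V g$ in the first term and obtain
\[
L_V L_V g = \frac{V(\lambda)}{\lambda}\,L_V g + 2\lambda\,L_V g = \Bigl(\frac{V(\lambda)}{\lambda} + 2\lambda\Bigr) L_V g .
\]
This is exactly \eqref{e1} with the stated $f$. The function $f$ is smooth on the open set because $\lambda$ is smooth and nonvanishing there. Hence $V$ is mixed Killing in the sense of Definition \ref{def:MK}, restricted to that set.

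The consequences follow at once. If $\lambda$ is constant, then $V(\lambda)=0$, which reduces (i) to (ii). In case (ii), the first term of $f$ vanishes, leaving $f = 2\lambda$. In the homothetic case one could also remark that, when $\lambda \equiv 0$, $V$ is Killing and is trivially mixed Killing with any $f$; this is why the statement restricts to the set where $L_V g \neq 0$.

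There is no real obstacle here. The only points requiring care are the justification of the division by $\lambda$, which is exactly the nonvanishing hypothesis, and the convention $L_V g = 2\lambda g$, which fixes the factor $2\lambda$ rather than $\lambda$ in $f$.
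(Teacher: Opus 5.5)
Your proposal is correct and matches the paper's proof: apply $L_V$ to $L_V g = 2\lambda g$ via the Leibniz rule to get $(2V(\lambda)+4\lambda^2)\,g$, then divide by $2\lambda$ on the set where $\lambda\neq 0$. Your extra remarks, that (i) reduces to (ii) and that $f$ is smooth because $\lambda$ is nonvanishing there, are correct.
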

\begin{proof}
From \eqref{e1} we have,
\[
L_V L_V g
 = L_V(2\lambda g)
 = 2(V\lambda)\,g + 2\lambda\,L_V g
 = 2(V\lambda)\,g + 4\lambda^2\,g
 = (2V\lambda + 4\lambda^2)\,g.
\]
On the other hand, $L_V g = 2\lambda g$, so on the set $\{\lambda\neq 0\}$ we can write
\[
L_V L_V g
 = \frac{2V\lambda + 4\lambda^2}{2\lambda}\,L_V g
 = \Bigl(\frac{V(\lambda)}{\lambda} + 2\lambda\Bigr)\,L_V g.
\]
Thus $V$ is mixed Killing in the sense of \eqref{e1}, with mixed Killing factor
$f$ as in the statement.
\end{proof}

\begin{corollary}\label{cor:homothetic-mixed-killing}
If $V$ is homothetic, i.e.\ $L_V g = 2c\,g$ for some constant $c\in\mathbb{R}$,
then $V$ is mixed Killing with \emph{constant} mixed Killing factor
$f = 2c$.
\end{corollary}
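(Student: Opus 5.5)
This is a direct computation, and it can also be read as a special case of Lemma \ref{lem:conformal-MK-main} with $\lambda = c$. To avoid dividing by $\lambda$, and to cover the case $c = 0$ as well, I would compute directly from Definition \ref{def:MK}.

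Assume $L_V g = 2c\,g$ with $c$ a real constant. Apply $L_V$ once more and use that $L_V$ is $\mathbb{R}$-linear, so it commutes with multiplication by a constant:
\[
L_V L_V g = L_V(2c\,g) = 2c\,L_V g.
\]
This is exactly \eqref{e1} with the constant function $f \equiv 2c$, which is smooth. The identity holds on all of $M$, not just on an open set where $L_V g \neq 0$.

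For comparison, the formula of Lemma \ref{lem:conformal-MK-main} reads $f = V(\lambda)/\lambda + 2\lambda$. Here $V(\lambda) = V(c) = 0$, so when $c \neq 0$ it also gives $f = 2c$. This is consistent with item (i) of that lemma.

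The only point needing care is the degenerate case $c = 0$, where $V$ is Killing. There the lemma's division by $\lambda$ is unavailable, but the direct computation above still applies: both sides vanish, and $f = 0$ is an admissible choice. This agrees with the remark after Definition \ref{def:MK} that a Killing field is mixed Killing with any factor. I do not expect any genuine obstacle.
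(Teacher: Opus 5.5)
Your proof is correct and is essentially the paper's argument. The paper obtains the corollary from Lemma~\ref{lem:conformal-MK-main}(i) by taking $\lambda = c$ constant, so that $V(\lambda)=0$ and $f = 2c$; your computation $L_V L_V g = L_V(2c\,g) = 2c\,L_V g$ is exactly that lemma's computation specialised to constant $\lambda$. Working directly, as you do, has one small advantage: it never divides by $\lambda$. So the identity holds on all of $M$ and covers the Killing case $c=0$, which the lemma (stated only where $L_V g \neq 0$) does not literally include.
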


While conformal fields are always mixed Killing, the converse is false in general.\\

\noindent
{\bf Note:} It should be noted that the concept of a strictly nontrivial mixed Killing vector field $V$, that is, one 
which is {\it not} Killing makes sense on $M \setminus \{ L_{V} g 
\neq 0 \}$.

\subsection{Mixed Killing fields in  product geometries}\label{sec:MK-Cigar}
%%%%%%%%%%%%%%%%%%%%%%%%%%%%%%%%%%%%%%%%%%%%%%%%%%%%%%%%%%%%
  In this subsection, we now explore mixed Killing vector fields in the aforementioned product manifolds. A key observation is that in one-dimensional directions, the mixed Killing condition reduces to an ordinary differential equation.  

\begin{proposition}[One-dimensional reduction]\label{pro:1D-reduction}
Let $(M,g)$ be a Riemannian manifold and suppose that, in suitable
coordinates, a vector field has the form $V = v(x)\,\partial_x$,
and satisfies $L_V g = A(x)\,T,  L_V L_V g = B(x)\, T$,
for some symmetric $(0,2)$-tensor $T$. Then $V$ is a mixed Killing vector
field, $L_V L_V g = f\, L_V g$, if and only if  $f = \frac{B}{A}$,
at every point where $A \neq 0$ and $T$ is nonzero. In particular,
the mixed Killing condition reduces to an ordinary differential equation for $v$.
\end{proposition}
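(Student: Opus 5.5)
The plan is to substitute the two given expressions directly into the defining identity \eqref{e1} of Definition \ref{def:MK} and compare coefficients of the common tensor $T$. By hypothesis, $L_V g = A(x)\,T$ and $L_V L_V g = B(x)\,T$. The mixed Killing condition $L_V L_V g = f\,L_V g$ therefore becomes
\[
B(x)\,T = f\,A(x)\,T, \qquad\text{that is,}\qquad \bigl(B(x) - f\,A(x)\bigr)\,T = 0 .
\]

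At a point $p$ where $T_p \neq 0$, some component $T_{ij}(p)$ is nonzero. Evaluating the displayed identity on that component forces $B = fA$ at $p$. If moreover $A(p)\neq 0$, this gives $f(p) = B(p)/A(p)$, which proves the ``only if'' direction pointwise. Conversely, on the open set where $A\neq0$ and $T\neq 0$, define $f := B/A$. This function is smooth there, being a quotient of smooth functions with nonvanishing denominator. With this choice $f\,L_V g = (B/A)\,A\,T = B\,T = L_V L_V g$, so $V$ is mixed Killing, which proves the ``if'' direction.

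For the final assertion, compute $A$ and $B$ from $V = v(x)\partial_x$. Using $L_V g_{ij} = V(g_{ij}) + g_{kj}\partial_i V^k + g_{ik}\partial_j V^k$, the coefficient $A$ is a first-order differential expression in $v$, built from $v$, $v'$ and derivatives of the metric coefficients in $x$. Applying $L_V$ once more shows that $B$ involves $v$, $v'$ and $v''$. Since $V$ depends only on $x$, the relation $B = fA$, with $f$ a prescribed function of $x$ (or with the requirement that $B/A$ have a specified form), is an ordinary differential equation in $v(x)$.

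The main point needing care is the hypothesis that a single tensor $T$ serves both Lie derivatives. Without it, proportionality of two symmetric tensors would impose several independent conditions rather than one scalar condition. Given the hypothesis, the argument is purely algebraic. The only other step needing attention is restricting to the set where $A \neq 0$ and $T\neq 0$, in line with the Note following Corollary \ref{cor:homothetic-mixed-killing}.
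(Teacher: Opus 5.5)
Your proposal is correct and follows the paper's proof essentially verbatim. You substitute $L_V g = A\,T$ and $L_V L_V g = B\,T$ into $L_V L_V g = f\,L_V g$, define $f := B/A$ on the set where $A\neq 0$ and $T\neq 0$ to obtain the identity, and conversely cancel the nonzero $T$ to force $f = B/A$. Your short sketch of why this is an ODE in $v$, and your observation that it only has content once $f$ (or the form of $B/A$) is prescribed, go slightly beyond the paper, which asserts that final sentence without argument.
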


\begin{proof}
By assumption,
\begin{equation}\label{form}
L_V g = A(x)\,T, \qquad L_V L_V g = B(x)\,T.
\end{equation}
At any point where $A \neq 0$ and $T$ is nonzero, we may write
\[
L_V L_V g = \frac{B(x)}{A(x)}\,L_V g,
\]
thus the mixed Killing condition holds with $f(x) =  \frac{B(x)}{A(x)}$.\\
Conversely, if  $L_V L_V g = f\,L_V g$, then \eqref{form}  implies that
\[
B(x)\,T = f(x)\,A(x)\,T.
\]
Since $T$ is nonzero at the point in question, it follows that $f(x) = \frac{B(x)}{A(x)}$.
\end{proof}

Now we study a particular class of mixed Killing fields in manifolds of the type $I\times N$, in particular in flat manifolds  

\begin{proposition}[Mixed Killing fields on product manifolds]
\label{prop:product-mixed-killing}
Let $(M,g) = (I\times N,\,dx^2 + ds^2)$ be a Riemannian product, where
$I\subset\mathbb{R}$ is an open interval.
For any $v\in C^\infty(I)$, consider the vector field $V = v(x)\,\partial_x$.
Then
\[
L_V g = 2v'(x)\,dx^2, \qquad
L_V L_V g = \bigl(2v v'' + 4(v')^2\bigr)\,dx^2.
\]
Consequently,  (i) $V$ is mixed Killing on $\{v'\neq 0\}$ with factor
$f = v\,\frac{v''}{v'} + 2v'$  and (ii)  $V$ is conformally Killing if and only if $v'\equiv 0$
\end{proposition}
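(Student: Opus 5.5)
The plan is a direct computation of the two Lie derivatives using the derivation property of $L_V$ on tensor products. We use the facts that $L_V ds^2 = 0$ and $L_V dx = d(V x) = d v = v'\,dx$.

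\textbf{First derivative.} Since $V = v(x)\partial_x$ has no $N$-component and its coefficient depends only on $x$, the flow of $V$ preserves each slice $\{x\}\times N$ and acts trivially on the $N$-factor. Hence $L_V ds^2 = 0$. We also have
\[
L_V(dx^2) = 2\,(L_V dx)\odot dx = 2v'\,dx^2,
\]
which gives $L_V g = 2v'\,dx^2$.

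\textbf{Second derivative.} Applying $L_V$ once more by the Leibniz rule,
\[
L_V L_V g = 2\,V(v')\,dx^2 + 2v'\,L_V(dx^2) = 2vv''\,dx^2 + 4(v')^2\,dx^2 .
\]
This is the claimed formula.

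\textbf{Part (i).} The situation now fits Proposition \ref{pro:1D-reduction} with $T = dx^2$ (nowhere zero), $A = 2v'$ and $B = 2vv'' + 4(v')^2$. On $\{v'\neq 0\}$ that proposition gives the mixed Killing factor
\[
f = \frac{B}{A} = \frac{vv''}{v'} + 2v'.
\]

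\textbf{Part (ii).} The statement says ``conformally Killing'', which I will read as conformal: $L_V g = \lambda g$ for some function $\lambda$. Suppose $2v'\,dx^2 = \lambda\,(dx^2 + ds^2)$ and assume $\dim N \ge 1$. Evaluating both sides on a nonzero vector tangent to $N$ gives $0 = \lambda\,|w|^2$, so $\lambda \equiv 0$. Then $v' \equiv 0$. Conversely, if $v' \equiv 0$, then $L_V g = 0$, so $V$ is Killing and in particular conformal.

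In this case $V = c\,\partial_x$ is a genuine Killing field. So the nontrivial mixed Killing fields $v\partial_x$ with $v' \neq 0$ are never conformal, and since $v$ is an arbitrary smooth function they form an infinite-dimensional family.

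The only step needing care is this last one. It relies on $N$ having positive dimension, so that the $ds^2$ block forces $\lambda = 0$. If $\dim N = 0$, every vector field on the one-dimensional manifold $I$ is conformal, so this hypothesis should be stated explicitly. Everything else is routine calculus of Lie derivatives.
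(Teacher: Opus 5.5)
Your proposal is correct and follows essentially the same route as the paper: compute $L_Vg$ and $L_VL_Vg$ directly, apply Proposition~\ref{pro:1D-reduction} with $T=dx^2$ for (i), and for (ii) observe that a tensor with only a $dx^2$ component cannot be a multiple of $g$ unless it vanishes. Two points in your version are slight sharpenings of what the paper leaves implicit: your Leibniz-rule bookkeeping makes clear that both $V(v')$ and $L_V(dx^2)$ contribute to the second derivative, and you state the hypothesis $\dim N\ge 1$ in (ii) explicitly.
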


\begin{proof}
Clearly, $(L_V g)(\partial_x, \partial_x) = 2v'(x)$,
and all other components vanish. Applying $L_V$ again yields
\[
L_V L_V g = \bigl(2v v'' + 4(v')^2\bigr)\,dx^2,
\]
because the only nontrivial contribution comes from differentiating $v'$ along $V$.
On the set $\{v'\neq 0\}$, Proposition~\ref{pro:1D-reduction} applies with $T = dx^2$, giving
\[
f = \frac{2vv'' + 4(v')^2}{2v'} = v\frac{v''}{v'} + 2v'.
\]
If $v'\equiv 0$, then $L_V g=0$ and hence $V$ is Killing.
Since $L_V g$ has only a $dx^2$ component, $L_V g$ cannot be proportional to $g$ unless it vanishes identically; thus,$V$ is conformal if and only if $v'\equiv 0$.  
\end{proof}

\begin{remark}
In particular, on Euclidean space $(\mathbb{R}^2,dx^2+dy^2)$, any vector
field of the form $V=v(x)\partial_x$ with $v'\not\equiv 0$ is mixed Killing
but neither Killing nor conformal. Hence, in flat or product geometries of type $I \times N$,
the mixed Killing class is strictly much larger than the conformal class. But we will see in the section that Cigar almost Ricci-Bourguignon solitons behave in a dramatically different way. 
\end{remark}

\section{Characterization of 2-Killing vector fields on  Cigar almost Ricci-Bourguignon solitons}\label{sec:Cigar-RB}

In this section, we first characterise Cigar almost Ricci-Bourguignon solitons up to homothety.
It is interesting to see that the potential vector field of  Cigar almost Ricci-Bourguignon solitons  
is a mixed Killing vector field. Hence, it is natural to ask whether we can classify mixed Killing vector fields
on Cigar almost Ricci-Bourguignon solitons? We answer this affirmatively by categorising all the mixed Killing vector Fields on Cigar almost Ricci-Bourguignon solitons.  In fact, we find the 
basis of all mixed Killing vector fields and show that its
dimension is $5$.

\noindent
\subsection{Cigar almost  Ricci-Bourguignon solitons}\label{C-PC}
  Recall that a Cigar almost Ricci-Bourguignon  soliton  described in the introduction  has soliton data as:
\[
\left(\mathbb{R}^2,\;
g_{\mathrm{Cigar-RB}}(t) =\frac{dx^2+dy^2}{E(t)+x^2+y^2},\;
  \xi=-2 (1-2\rho) \Bigl(x\frac{\partial}{\partial x}
             +y\frac{\partial}{\partial y}\Bigr),\;
  \lambda=0, \;  \rho  \right),
\]
where  $E(t)=e^{4(1-2\rho)t}$  and the vector field
 $\xi = -2(1-2\rho)\left(x\frac{\partial}{\partial x}
                       +y\frac{\partial}{\partial y}\right) = \nabla f,$
where
\begin{equation*}
  f(x,y,t) := -(1-2\rho)\,\log D(t,x,y)
            = -(1-2\rho)\,\log\bigl(E(t)+x^2+y^2\bigr).
\end{equation*}

\vspace{0.1in}

\noindent
{\bf Geodesic polar coordinate transformation of Cigar Ricci--Bourguignon metric:}
Consider the transformation  $x = r \cos \theta, y = r \sin \theta$.
Then,
\begin{equation}\label{Cigar1}
g_{\mathrm{Cigar-RB}}(t) =   \frac{dr^2 + r^2 d\theta^2}{E(t) + r^2}, \qquad   E(t)=e^{4(1-2\rho)t}.
\end{equation}
If we consider the transformation in geodesic polar coordinates $(s,\theta)$ defined by
\[
r = \sqrt{E}\sinh s,
\]
then
\begin{equation}\label{iso1}
g_{\mathrm{Cigar-RB}}(t) =  ds^2 + \psi(s)^2\,d\theta^2, \qquad \psi(s)=\tanh s.
\end{equation}

\vspace{0.2in}
\noindent
{\bf Behaviour of Cigar metric near zeros of \boldmath$\tanh s$ \unboldmath:}
Note that as $s \rightarrow 0$,   $\tanh s \sim s$.
Hence, the metric looks like 
$g_{\mathrm{Cigar-RB}}(t) =  ds^2 + s^2 d\theta^2$
which is a flat metric.  
As $s \rightarrow \infty, \tanh s \rightarrow 1$, so it approaches a cylinder of radius $1$.
Observe that zeros of   $\tanh s$ are the same as zeros of  $\sinh s$, and it vanishes 
when $s = n \pi  i$. Near these points, the warped product metric behaves like 
$\tanh s \sim (s - s_{0})$, where $s_{0} =  n \pi  i,$ for some $n \in \mathbb{Z}$.
And as ${\tanh}' s_{0} =1$,  the metric becomes 
$g_{\mathrm{Cigar-RB}}(t) =  ds^2 + (s - s_{0})^2 d\theta^2$. 
This is just a flat polar metric on ${\mathbb R}^2$. So near each zero, the Cigar metric 
becomes locally Euclidean. And the Cigar has no conical singularity there; it is smooth and locally 
${\mathbb R}^2$.\\

From our discussion earlier, we see that the Cigar Ricci-Bourguignon metric \eqref{iso1}  satisfies the  equation 
\[
\operatorname{Ric} + \nabla^2 f - \rho R g = 0,
\]
for all $t$. Thus $(\mathbb{R}^2,g(t),\xi,\lambda\equiv 0,\rho)$ is a steady 
gradient almost Ricci--Bourguignon soliton.  Moreover:
\begin{enumerate}
\item At $t=0$ and $\rho=0$, we recover Hamilton's steady Cigar Ricci soliton.
\item For $\rho < 1/2$, the metric expands; for $\rho > 1/2$, it shrinks.
\item The soliton is self-similar under the one-parameter group of diffeomorphisms
\[
\varphi_t(x,y) = (a(t)x, a(t)y), \qquad a(t) = e^{-2(1-2\rho)t}.
\]
\end{enumerate}

\subsection{Rigidity for 2D almost Ricci--Bourguignon solitons}
We establish a classification result showing that the Cigar is the unique complete surface with positive curvature admitting a steady, almost gradient  Ricci-Bourguignon soliton structure.  First, we show that a Cigar Ricci-Bourguignon soliton has positive sectional curvature.

\begin{proposition}
The metric given by
\[
g = \frac{dr^2 + r^2 d\theta^2}{E(t) + r^2}, \qquad   E(t)=e^{4(1-2\rho)t}.
\]
has Gaussian curvature  
$$ K= \frac{2E}{(E+r^2)}.$$ Thus any Cigar Ricci 
-Bourguignon soliton has positive sectional curvature.
\end{proposition}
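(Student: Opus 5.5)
The plan is to compute $K$ directly from the conformally flat form of the metric and then cross-check it with the warped-product form \eqref{iso1}. Write $g = e^{2u}(dx^2+dy^2)$ with
\[
u = -\tfrac12 \log\bigl(E + x^2 + y^2\bigr),
\]
where $E=E(t)>0$ is regarded as a constant for each fixed $t$. For a metric conformal to the Euclidean one, the Gaussian curvature is
\[
K = -e^{-2u}\,\Delta_0 u,
\]
where $\Delta_0$ is the flat Laplacian.

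The key step is the computation of $\Delta_0 \log(E+r^2)$. Using the radial Laplacian $\partial_r^2 + \frac1r\partial_r$ with $h(r)=\log(E+r^2)$, we have $h' = 2r/(E+r^2)$ and
\[
h'' = \frac{2(E-r^2)}{(E+r^2)^2}.
\]
Therefore
\[
\Delta_0 h = \frac{2(E-r^2)+2(E+r^2)}{(E+r^2)^2} = \frac{4E}{(E+r^2)^2}.
\]
Hence $\Delta_0 u = -2E/(E+r^2)^2$. Since $e^{-2u}=E+r^2$, this gives
\[
K = \frac{2E}{E+r^2}.
\]

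As an independent check, I would use the geodesic polar form \eqref{iso1}, $g=ds^2+\psi(s)^2d\theta^2$ with $\psi=\tanh s$. For such a metric $K=-\psi''/\psi$. Since $\tanh'' s = -2\,\mathrm{sech}^2 s\,\tanh s$, we get $K = 2\,\mathrm{sech}^2 s$. Substituting $\cosh^2 s = 1+\sinh^2 s = (E+r^2)/E$ (from $r=\sqrt E\sinh s$) recovers $K = 2E/(E+r^2)$. This check also confirms that the formula holds at the tip $r=0$, where $K=2$, and not only away from it.

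Positivity then follows at once: $E(t)=e^{4(1-2\rho)t}>0$, so $K>0$ everywhere on $\mathbb{R}^2$ and for every $t$ and $\rho$. In dimension two the sectional curvature is the Gaussian curvature, so every Cigar Ricci--Bourguignon soliton has positive sectional curvature.

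There is no genuine obstacle here. The only care needed is the sign convention in $K=-e^{-2u}\Delta_0 u$ and the treatment of $E$ as a constant in $(x,y)$. The warped-product cross-check guards against both.
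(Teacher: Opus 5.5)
Your proof is correct and follows the paper's route: your formula $K=-e^{-2u}\Delta_0 u$ is the paper's $K=-\frac{1}{2u}\Delta_{\mathbb{R}^2}\log u$ with the conformal factor renamed ($e^{2u}$ for you, $u$ in the paper). Your explicit computation $\Delta_0\log(E+r^2)=4E/(E+r^2)^2$ fills in the step the paper leaves implicit. One small correction to the warped-product cross-check: $K=-\psi''/\psi$ is also only valid for $s>0$, so the value $K=2$ at the tip comes from continuity, or from doing the Laplacian in Cartesian coordinates, not from that check.
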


\begin{proof}
For a conformal metric
\[
g=u(r)(dr^2+r^2d\theta^2),
\]
the Gaussian curvature satisfies
\[
K
=
-\frac1{2u}
\Delta_{\mathbb R^2}(\log u).
\]

Therefore,
\[
K
=
\frac{2E}{(E+r^2)}.
\]
Thus, the Cigar Ricci-Bourguignon soliton has positive curvature.
\end{proof}

\begin{theorem}[2D rigidity]\label{2D rigidity}
Let $(\Sigma^2,g,f)$ be a complete steady almost gradient
Ricci-Bourguignon soliton with $\rho\neq \tfrac12$.  Suppose that $\nabla f$ has a zero.
If the Gauss curvature of $g$ is positive, then $(\Sigma^2,g)$ is isometric, up to homothety, to Hamilton's Cigar Ricci-Bourguignon soliton.
\end{theorem}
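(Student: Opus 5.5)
In dimension two, $\operatorname{Ric}=Kg$ and $R=2K$, so the soliton equation $\operatorname{Ric}+\nabla^2 f=(\lambda+\rho R)g$ with $\lambda\equiv 0$ becomes
\[
\nabla^2 f = \bigl((2\rho-1)K\bigr)\,g .
\]
Hence $\nabla f$ is a closed conformal (gradient conformal) vector field with conformal factor $\varphi:=(2\rho-1)K$. Since $\rho\neq\tfrac12$ and $K>0$, the factor $\varphi$ never vanishes. I would reduce the theorem to Hamilton's uniqueness of the cigar among complete steady gradient Ricci solitons of positive curvature, after showing that the pair $(g,f)$ can be rescaled into an honest steady Ricci soliton.

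\textbf{Step 1: local structure.} A nontrivial gradient conformal field on a surface forces a local warped product structure. Let $p$ be the zero of $\nabla f$. At $p$ we have $\nabla^2 f=\varphi(p)g$ with $\varphi(p)\neq 0$, so $p$ is a nondegenerate critical point and hence isolated. On the complement, the level sets of $f$ are circles, and the metric takes the form $g=ds^2+\psi(s)^2d\theta^2$ with $f=f(s)$, $f''=\varphi$ and $\psi'/\psi=f''/f'$. The last relation gives $f'=c\,\psi$. Completeness together with the positivity of $\varphi$ (or of $-\varphi$) shows that $f$ is strictly monotone along every geodesic issuing from $p$. Therefore $p$ is the only critical point, $\Sigma\cong\mathbb R^2$, and the coordinates $(s,\theta)$ are global geodesic polar coordinates about $p$, with $s\in[0,\infty)$ and $\psi(0)=0$, $\psi'(0)=1$.

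\textbf{Step 2: the ODE.} Here $K=-\psi''/\psi$, and the equation $f''=(2\rho-1)K$ with $f'=c\psi$ reads
\[
c\,\psi'=(1-2\rho)\,\frac{\psi''}{\psi}.
\]
This is an autonomous ODE. Setting $a=c/(1-2\rho)$ and integrating once gives $\psi'' = a\psi\psi'$, hence $\psi'=\tfrac a2\psi^2+1$, where the constant is fixed by $\psi(0)=0$, $\psi'(0)=1$. Positivity of $K$ forces $\psi''<0$, so $a<0$. Writing $a=-2/b^2$ and solving gives $\psi=b\tanh(s/b)$. This is exactly \eqref{iso1} up to the homothety $g\mapsto b^{-2}g$. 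Equivalently, $\tilde f:=f/(1-2\rho)$ satisfies $\operatorname{Ric}+\nabla^2\tilde f=0$, so one can instead cite Hamilton's classification \cite{Hamilton1988} directly. I would present the ODE argument and mention the citation as a cross-check.

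\textbf{Main obstacle.} The main difficulty is Step 1. There I must justify that completeness together with a single zero of $\nabla f$ gives global polar coordinates, ruling out a second critical point or a cylinder end. I would handle this by noting that every critical point of $f$ is nondegenerate of the same type: each is a minimum if $\varphi>0$ and a maximum if $\varphi<0$. By Morse theory on a complete surface, the function $f$ can then have at most one critical point, given that $|\nabla f|$ grows along the gradient flow because $f''=\varphi$ has a fixed sign.
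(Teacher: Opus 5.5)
Your proposal is correct and follows essentially the paper's route: reduce to $\nabla^2 f=(2\rho-1)Kg$, write $g=ds^2+\psi^2d\theta^2$ about the zero of $\nabla f$, derive $f'=c\psi$ and $\psi''=a\psi\psi'$, and use $K>0$ to force $a<0$ and $\psi=b\tanh(s/b)$. The paper simply cites Tashiro's theorem for the global polar-coordinate step. If you prove that step yourself, rely on your geodesic argument rather than the Morse-theory remark, which alone would not exclude a second minimum on a noncompact surface: use $(f\circ\gamma)'(t)=\int_0^t\varphi(\gamma(\tau))\,d\tau\neq0$ along every geodesic from $p$, together with the Gauss lemma, to see that $f$ is a strictly monotone function of $d(p,\cdot)$, so $\exp_p$ is a diffeomorphism. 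Your rescaling of $f/(1-2\rho)$ into a steady Ricci soliton potential is also a valid shortcut to Hamilton's uniqueness theorem, which the paper does not use.
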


\begin{proof}
Let $(\Sigma^2,g,f)$ be a complete steady gradient Ricci--Bourguignon soliton
with $\rho\neq \tfrac12$ and positive Gauss curvature.
Thus
\begin{equation*}\label{RB}
\operatorname{Ric} + \nabla^2 f = \rho R g ,
\end{equation*}
where $\rho$ is a constant.
On a surface, $\operatorname{Ric} = \tfrac{R}{2}g$, hence  the above equation  reduces to
\begin{equation}\label{Hessian}
\nabla^2 f = \left(\rho-\tfrac12\right) R\, g .
\end{equation}
Therefore, $\nabla f$ is a gradient conformal vector field.
Since $(\Sigma^2,g)$ is complete with $K>0$, equation \eqref{Hessian} implies that the only possible critical point of  $f$ is either a minimum or a maximum.
By a standard result \cite{MR174022} on a complete surface admitting a nontrivial gradient 
conformal vector field with positive curvature, $(\Sigma^2,g)$ is rotationally symmetric about this point.\\
Consequently, in the neighbourhood of this point, $g$ can be written in polar coordinates as  
\[
g = dr^2 + h(r)^2\, d\theta^2, 
\]
where $h(0)=0$ and $h'(0)=1$. For such a metric, the Gauss curvature and scalar curvature are
\[
K = -\frac{h''}{h}, \qquad R = 2K = -\frac{2h''}{h}.
\]
From  (\ref{Hessian}), it follows that $f$ is also a radial function.  
The Hessian of $f$ is given by 
\[
(\nabla^2 f)_{rr} = f'', \qquad
(\nabla^2 f)_{\theta\theta} = h h' f'.
\]
Substituting into \eqref{Hessian} yields
\begin{equation}\label{system}
\begin{aligned}
f'' &= \left(\rho-\tfrac12\right) R, \\
\frac{h'}{h} f' &= \left(\rho-\tfrac12\right) R .
\end{aligned}
\end{equation}
Subtracting the two equations gives   $f'' - \frac{h'}{h}f' = 0$
which integrates to  $f' = a\, h$  for some constant $a$.
Substituting $f'=ah$ and $R=-2h''/h$ into \eqref{system}, we obtain
\[
(1-2\rho)h'' - a h h' = 0.
\]
Since $\rho\neq \tfrac12$, this can be written as
\[
h'' = k\, h h', \qquad k =\frac{a}{1-2\rho}.
\]
Set $u=h'(r)$. Then $h'' = u \frac{du}{dh}$, and the equation becomes
\[
u\frac{du}{dh} = k h u.
\]
Since $K>0$, we have $h>0$ and $h'>0$ for all $r>0$, so dividing by $u$ and
integrating yields
\[
u(h) = \frac{k}{2}h^2 + C.
\]
The smoothness conditions at $r=0$ imply $C=1$, hence
\[
h' = 1 + \frac{k}{2}h^2.
\]

Positive curvature implies $h''<0$ for $r>0$, and since $h,h'>0$, this forces
$k<0$. Writing $A =-\frac{k}{2}>0$, we obtain
\[
h' = 1 - A h^2.
\]
Separating variables and integrating gives
\[
\frac{1}{\sqrt A}\operatorname{arctanh}(\sqrt A\, h)= r,
\]
so that
\[
h(r)=\frac{1}{\sqrt A}\tanh(\sqrt A\, r).
\]

Therefore,
\[
g = dr^2 + \frac{1}{A}\tanh^2(\sqrt A\, r)\, d\theta^2.
\]
This metric is complete, has positive curvature, and coincides (up to scaling)
with Hamilton’s Cigar soliton. See  (\ref{iso1}) for more details.
Hence $(\Sigma^2,g)$ is a Cigar Ricci-Bourguignon soliton.
\end{proof}

Now we start our discussion of the mixed Killing vector field 
on Cigar Ricci-Bourguignon soliton.

\subsection{The conformal algebra of the Cigar}
To understand the mixed Killing vector fields of the Cigar soliton, we first show that its potential vector field is indeed
a mixed Killing vector field. Then we determine its conformal algebra. Note that the Lie algebra of conformal vector fields on a Riemannian manifold \(M\) is called the conformal algebra. 

\begin{proposition}\label{pro:mixed-killing-Cigar}
The potential vector field of  Cigar almost 
Ricci--Bourguignon  solitons  $(\mathbb{R}^2,g(t),\xi,\lambda\equiv 0,\rho)$ is a mixed Killing vector field.
\end{proposition}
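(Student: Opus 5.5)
The plan is to show that $\xi$ is in fact a \emph{conformal} vector field for $g_{\mathrm{Cigar-RB}}(t)$ and then invoke Lemma~\ref{lem:conformal-MK-main}. Throughout, write $c:=1-2\rho$ and $r^2=x^2+y^2$. To avoid a clash with the potential $f$, denote the mixed Killing factor by $\mu$.

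\textbf{Conformality of $\xi$.} There are two equivalent routes. The abstract one uses the soliton equation: on a surface $\operatorname{Ric}=\tfrac R2 g$, so \eqref{Hessian} gives
\[
L_\xi g=2\nabla^2 f=-(1-2\rho)R\,g .
\]
Hence $\xi$ is conformal with $2\lambda=-cR$. To get explicit formulas, I would instead compute directly. Write $g=u\,(dx^2+dy^2)$ with $u=(E+r^2)^{-1}$ and $\xi=-2c\,r\partial_r$. Since $L_\xi(dx^2+dy^2)=-4c\,(dx^2+dy^2)$ and $\xi(u)=4c\,r^2/(E+r^2)^2$, we get
\[
L_\xi g=\Bigl(\frac{4cr^2}{(E+r^2)^2}-\frac{4c}{E+r^2}\Bigr)(dx^2+dy^2)
=-\frac{4cE}{E+r^2}\,g .
\]
Thus $L_\xi g=2\lambda g$ with $\lambda=-\dfrac{2cE}{E+r^2}$.

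\textbf{Case split.} If $\rho=\tfrac12$, then $\xi\equiv 0$, which is Killing and hence trivially mixed Killing. If $\rho\neq\tfrac12$, then $c\neq0$ and $E>0$, so $\lambda\neq 0$ on all of $\mathbb{R}^2$. This is the key point: $L_\xi g$ vanishes nowhere, so the restriction to $\{L_\xi g\neq 0\}$ in Lemma~\ref{lem:conformal-MK-main} is vacuous. The lemma then applies globally with
\[
\mu=\frac{\xi(\lambda)}{\lambda}+2\lambda .
\]

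\textbf{The factor $\mu$.} We have
\[
\xi(\lambda)=-2cr\,\partial_r\lambda=-\frac{8c^2Er^2}{(E+r^2)^2},
\qquad\text{so}\qquad
\frac{\xi(\lambda)}{\lambda}=\frac{4cr^2}{E+r^2}.
\]
Therefore
\[
\mu=\frac{4(1-2\rho)\,(r^2-E)}{E+r^2}.
\]
This is a smooth function on $\mathbb{R}^2$ for every $t$. Hence $L_\xi L_\xi g=\mu\,L_\xi g$ holds everywhere, and $\xi$ is mixed Killing.

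The only delicate step is the nonvanishing of $\lambda$, which is what makes $\mu$ globally smooth; everything else is a routine Lie derivative computation. As a consistency check, $\lambda$ equals $-\tfrac{c}{2}R$, matching the route via \eqref{Hessian}, and it is proportional to the positive Gaussian curvature.
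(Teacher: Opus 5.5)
Your proof is correct and follows essentially the paper's route. You compute $L_\xi g=-\frac{4(1-2\rho)E}{E+r^2}\,g$ directly, which matches the paper's $\psi$, and then obtain the factor $\frac{4(1-2\rho)(r^2-E)}{E+r^2}$, which coincides with the paper's $\alpha$ in \eqref{eq:mixed-killing-factor-Cigar}; the only difference is that you get it via Lemma~\ref{lem:conformal-MK-main} rather than stating it outright, and your remarks that $L_\xi g$ vanishes nowhere for $\rho\neq\tfrac12$ and that $\rho=\tfrac12$ is trivial are small refinements the paper leaves implicit.
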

\begin{proof}
The potential vector field is
\[
\xi
 = -2(1-2\rho)\left(x\frac{\partial}{\partial x}
                    +y\frac{\partial}{\partial y}\right).
\]
Set $r^2=x^2+y^2$ and $D(t,x,y):=E(t)+r^2$ where  $E(t)=e^{4(1-2\rho)t}.$
\[
L_\xi g = \psi\,g  \;\;\; \mbox{where} \;
\qquad 
\psi(x,y,t) = -\frac{4(1-2\rho)\,E(t)}{D(t,x,y)},
\]
and
\[
L_\xi L_\xi g =  \alpha \,L_\xi g
\]
holds with mixed Killing factor
\begin{equation}\label{eq:mixed-killing-factor-Cigar}
  \alpha(x,y,t)
  = -4(1-2\rho)\,\frac{E(t) - (x^2+y^2)}{E(t) + (x^2+y^2)}.
\end{equation}
In particular, $\xi$ is mixed Killing and $\alpha$ is not identically zero on
$\mathbb{R}^2$.
Note that $f \neq 0$ on $\mathbb{R}^2 \setminus \{r = \sqrt{E}\}$, 
confirming that $\xi$ is a genuinely non-trivial example of a mixed 
Killing field.
\end{proof}

\begin{theorem}[Conformal algebra of a Cigar Ricci -Bourguignon]\label{theorem:conf-4d}
Let $g$ be a Cigar-type Ricci--Bourguignon metric on $\mathbb{R}^2$,
\[
g = \phi(r)\,(dx^2 + dy^2),\qquad
\phi(r) := \frac{1}{E + r^2},\quad r^2 = x^2 + y^2, \;  E(t)=e^{4(1-2\rho)t}
\]
and let $\delta = dx^2 + dy^2$ be the Euclidean metric. Then:
\begin{enumerate}
\item[\textup{(i)}] A smooth vector field $V$ is conformal for $g$ if and only if it is conformal for $\delta$.

\item[\textup{(ii)}] The Lie algebra of complete conformal vector fields on $(\mathbb{R}^2,g)$ is $4$--dimensional and is spanned by
\[
\partial_x,\quad \partial_y,\quad -y\partial_x + x\partial_y,\quad x\partial_x + y\partial_y.
\]
\end{enumerate}
\end{theorem}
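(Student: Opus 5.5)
Part (i) reduces to the conformal invariance of the conformal Killing equation. For $g=\phi\,\delta$ with $\phi>0$ smooth, the Leibniz rule for the Lie derivative gives
\[
L_V g = (V\phi)\,\delta + \phi\, L_V\delta .
\]
If $L_V\delta = \mu\,\delta$, then $L_V g = \bigl(V(\log\phi)+\mu\bigr) g$, so $V$ is $g$-conformal. Conversely, if $L_V g=\lambda g$, solve for $L_V\delta$ to get $L_V\delta = \phi^{-1}\bigl(\lambda\phi - V\phi\bigr)\delta$, so $V$ is $\delta$-conformal. No property of the specific $\phi(r)=1/(E+r^2)$ is needed beyond positivity and smoothness.

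For part (ii), I would use (i) to transfer the problem to the Euclidean plane. A vector field $V=P\partial_x+Q\partial_y$ is $\delta$-conformal on $\mathbb{R}^2$ exactly when $P_x=Q_y$ and $P_y=-Q_x$. These are the Cauchy--Riemann equations, so $F=P+iQ$ is an entire holomorphic function. The task is then to decide which entire $F$ give a complete flow $\dot z = F(z)$ on $\mathbb{C}$. Completeness of a vector field depends only on the vector field and the manifold, not on the metric, so it can be checked in the plane.

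The main step is the classical fact that the holomorphic flow of $F$ is complete on $\mathbb{C}$ if and only if $F(z)=a z+b$ with $a,b\in\mathbb{C}$. The plan for this step is as follows.
\begin{enumerate}
\item Completeness makes each time-$t$ map a biholomorphism of $\mathbb{C}$, hence an affine map $z\mapsto \alpha(t)z+\beta(t)$, since the automorphisms of $\mathbb{C}$ are affine.
\item Differentiating at $t=0$ shows that $F$ is affine.
\item As a sanity check of necessity, note that for $F$ polynomial of degree $\ge 2$, solutions of $\dot z = F(z)$ blow up in finite time. One sees this by comparing with $\dot z = z^n$ near infinity; for instance, the real solutions of $\dot x = x^2$ escape in finite time.
\end{enumerate}
Conversely, every affine $F$ gives the explicit global flow $z(t)=e^{at}z_0+\tfrac{b}{a}(e^{at}-1)$, or $z_0+bt$ when $a=0$.

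Finally, I would read off the real basis of $F=az+b$ with $a,b\in\mathbb{C}$:
\begin{itemize}
\item $b=1$ gives $\partial_x$;
\item $b=i$ gives $\partial_y$;
\item $a=1$ gives $x\partial_x+y\partial_y$;
\item $a=i$ gives $-y\partial_x+x\partial_y$.
\end{itemize}
This yields the stated $4$-dimensional real Lie algebra, which is closed under brackets since affine maps form a Lie algebra. I expect the main obstacle to be justifying rigorously that complete holomorphic fields on $\mathbb{C}$ are affine. I would cite $\mathrm{Aut}(\mathbb{C})=\{z\mapsto\alpha z+\beta\}$ rather than analysing entire transcendental $F$ directly. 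The point needing care is that the flow maps are holomorphic, which holds because they are flows of a holomorphic vector field.
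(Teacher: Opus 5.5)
Your proposal is correct and follows essentially the same route as the paper: part (i) uses the Leibniz rule $L_V(\phi\delta)=(V\phi)\,\delta+\phi\,L_V\delta$, and part (ii) transfers the problem to $(\mathbb{C},\delta)$, uses that $\mathrm{Aut}(\mathbb{C})$ consists of affine maps so that the time-$t$ maps of a complete conformal field are affine, differentiates at $t=0$ to get $V=\alpha+\beta z$, and reads off the four basis fields. Your explicit Cauchy--Riemann reduction is a useful addition, as is your remark that the flow maps are holomorphic rather than merely conformal (conformality alone would also allow $z\mapsto a\bar z+b$); both make precise a point the paper leaves implicit.
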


\begin{proof}
(i) Since $g=\phi\,\delta$ with $\phi>0$ smooth, we have, for any vector field $V$,
\[
L_V g
 = L_V(\phi\delta)
 = (V\phi)\,\delta + \phi\,L_V\delta.
\]
If $V$ is conformal for $\delta$, i.e.\ $L_V\delta = 2\lambda_\delta\,\delta$, then
\[
L_V g
 = (V\phi)\,\delta + 2\phi\lambda_\delta\,\delta
 = 2\Bigl(\lambda_\delta + \frac{V(\log\phi)}{2}\Bigr)\,\phi\delta
 = 2\lambda_g\,g,
\]
so $V$ is conformal for $g$ with
\[
\lambda_g = \lambda_\delta + \frac{V(\log\phi)}{2}. 
\]
\noindent
Conversely, if $V$ is conformal for $g$, $L_V g = 2\lambda_g g$, then
\[
(V\phi)\,\delta + \phi\,L_V\delta = 2\lambda_g\,\phi\,\delta.
\]
Dividing by $\phi$ gives
\[
L_V\delta = 2\Bigl(\lambda_g - \frac{V(\log\phi)}{2}\Bigr)\,\delta,
\]
so $V$ is conformal for $\delta$. This proves (i).

\smallskip
(ii) By (i), it suffices to determine the Lie algebra of complete conformal vector fields on $(\mathbb{R}^2,\delta)$.

Identify $(\mathbb{R}^2,\delta)$ with the complex plane $(\mathbb{C},|\cdot|^2)$ via $z=x+iy$. The global conformal diffeomorphisms of $\mathbb{C}$ are precisely the affine maps
\[
\Phi(z) = a z + b,\qquad a\in\mathbb{C}^\ast,\ b\in\mathbb{C},
\]
({\bf see \cite{MR503901}}. Thus
\[
\mathrm{Aut}(\mathbb{C}) = \{z\mapsto az+b\}
\]
is a $4$--dimensional real Lie group.
\noindent
A complete conformal vector field $V$ on $(\mathbb{R}^2,\delta)\cong\mathbb{C}$ generates a one-parameter group of conformal diffeomorphisms $\{\Phi_t\}_{t\in\mathbb{R}}\subset\mathrm{Aut}(\mathbb{C})$:
\[
\frac{d}{dt}\Phi_t(z) = V(\Phi_t(z)),\qquad \Phi_0=\mathrm{id}_{\mathbb{C}}.
\]
Since each $\Phi_t$ is affine, we can write $\Phi_t(z)=a(t)z + b(t)$, with $a(0)=1$, $b(0)=0$. Differentiating at $t=0$ yields
\[
V(z) = \left.\frac{d}{dt}\right|_{t=0}\Phi_t(z) = \alpha + \beta z
\]
for some $\alpha,\beta\in\mathbb{C}$. Conversely, any vector field of the form $V(z)=\alpha + \beta z$ generates a one-parameter subgroup of affine conformal automorphisms and is therefore a complete conformal vector field.
Writing $\alpha = a_1+ia_2$, $\beta=b_1+ib_2$ and separating real and imaginary parts, the corresponding real vector field on $\mathbb{R}^2$ is
\[
V
 = a_1\,\partial_x + a_2\,\partial_y
 + b_1\,(x\partial_x + y\partial_y)
 + b_2\,(-y\partial_x + x\partial_y),
\]
which is a linear combination of
\[
\partial_x,\ \partial_y,\ x\partial_x + y\partial_y,\ -y\partial_x + x\partial_y.
\]
These four vector fields are complete and conformal for $\delta$, and hence, by (i), conformal for $g$ as well. They generate all complete conformal vector fields on $(\mathbb{R}^2,\delta)$ and thus on $(\mathbb{R}^2,g)$.
\end{proof}

\begin{remark}
The restriction to \emph{complete} conformal vector fields is essential. 
On $\mathbb{C}$, the vector field $V(z) = z^2 \partial_z$ is holomorphic 
and therefore conformal, but generates the flow $z(t) = z_0/(1-z_0 t)$, 
which has finite maximal existence time for $z_0 \neq 0$. Such fields 
generate an infinite-dimensional Lie algebra.
Moreover, completeness is an \emph{intrinsic} property of the vector 
field (independent of the metric), so the four-dimensional conformal 
algebra is the same for $g$ and $\delta$.
\end{remark}

\subsection{Mixed Killing fields in  Cigar geometry:  Complete characterisation}
In this subsection, we classify strict
mixed Killing fields in a Cigar-type Ricci--Bourguignon metric.
Unless otherwise stated, while working with a mixed Killing field
$V$, we will work on the complement of the set where 
$L_{V} g = 0$ (see note after Corollary \ref{cor:homothetic-mixed-killing}).

%%%%%%%%%%%%%%%%%%%%%%%%%%%%%%%%%%%%%%%%%%%%%%%%%%%%%%%%%%%%
\subsubsection{Angular fields}
%%%%%%%%%%%%%%%%%%%%%%%%%%%%%%%%%%%%%%%%%%%%%%%%%%%%%%%%%%%%

\begin{theorem}[Angular rigidity]\label{thm:Cigar-angular}
Let
\[
V = v(r, \theta)\,\partial_\theta
\]
be a smooth local vector field on a Cigar Ricci--Bourguignon soliton.
Then $V$ is a mixed Killing vector field if and only if $v$ is constant.
In this case, $V$ is a rotational Killing field. 
\end{theorem}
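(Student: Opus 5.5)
The plan is to compute $L_V g$ and $L_VL_Vg$ in the geodesic polar coordinates \eqref{iso1}, where $g=ds^2+\psi(s)^2d\theta^2$ with $\psi=\tanh s$, and $V=v(s,\theta)\,\partial_\theta$ with $v$ written as a function of $s$ through $r=\sqrt E\sinh s$. I would use the coordinate formula $(L_Vh)_{ij}=V(h_{ij})+h_{kj}\,\partial_iV^k+h_{ik}\,\partial_jV^k$. Since $V^\theta=v$ and $V^s=0$, this gives
\[
(L_Vg)_{ss}=0,\qquad (L_Vg)_{s\theta}=\psi^2 v_s,\qquad (L_Vg)_{\theta\theta}=2\psi^2 v_\theta .
\]
Applying the same formula to $h=L_Vg$, the $ss$-component of the second derivative is
\[
(L_VL_Vg)_{ss}=2h_{s\theta}\,v_s=2\psi^2 v_s^2 .
\]
The mixed Killing condition \eqref{e1} forces this to equal $f\,(L_Vg)_{ss}=0$. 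Since $\psi\neq 0$ away from the tip, we get $v_s\equiv 0$, so $v=v(\theta)$.

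With $v=v(\theta)$, the only nonzero component is $(L_Vg)_{\theta\theta}=2\psi^2v'$. A direct computation gives $(L_VL_Vg)_{\theta\theta}=2\psi^2(vv''+2v'^2)$. This is exactly the one-dimensional reduction of Proposition~\ref{pro:1D-reduction}, now with $T=\psi^2d\theta^2$, and it yields $f=v v''/v'+2v'$ on $\{v'\neq0\}$. I flag this as the main obstacle: \emph{locally} (away from the tip, on a sector) the condition imposes no further constraint, so a nonconstant $v(\theta)$ is mixed Killing on a small open set. The rigidity therefore has to come from global smoothness of $V$ on the Cigar $\mathbb{R}^2$, and in particular at the tip $r=0$. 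I would interpret ``smooth vector field on a Cigar soliton'' accordingly, namely that $V$ is smooth on a neighbourhood of the tip (or on the whole surface).

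The key step is then the following. In Cartesian coordinates $\partial_\theta=-y\partial_x+x\partial_y$, so
\[
V=r\,v(\theta)\,\bigl(-\sin\theta\,\partial_x+\cos\theta\,\partial_y\bigr).
\]
The components of $V$ are positively homogeneous of degree one in $(x,y)$. A function that is $C^1$ at the origin and homogeneous of degree one is linear, because it equals its differential at $0$. So $v(\theta)\sin\theta$ and $v(\theta)\cos\theta$ must both be linear combinations of $\cos\theta$ and $\sin\theta$. This forces
\[
v(\theta)=a+b\cos 2\theta+c\sin 2\theta .
\]
To exclude $b,c$ I would substitute back: $v\sin\theta$ contains $\sin3\theta$ terms unless $b=c=0$. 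More directly, $v(\theta)(-\sin\theta,\cos\theta)=(\alpha\cos\theta+\beta\sin\theta,\ \gamma\cos\theta+\delta\sin\theta)$ together with the orthogonality to the radial direction gives $\alpha=\delta=0$ and $\gamma=-\beta$, hence $v$ is constant.

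As a backup argument that avoids the tip, I would use that $v$ is $2\pi$-periodic and that $f$ must be smooth across the zeros of $v'$. At such zeros $(L_VL_Vg)_{\theta\theta}=2\psi^2vv''$ must vanish. Combined with the ODE $vv''=(f-2v')v'$ and uniqueness, this should force $v'\equiv0$; I expect this route to be more delicate than the tip argument. Conversely, if $v$ is constant then $V=c\,\partial_\theta$ is the rotational Killing field, since $\psi$ does not depend on $\theta$, so it is trivially mixed Killing.
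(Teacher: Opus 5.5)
Your argument is correct, and it takes a genuinely different route from the paper; in fact it repairs the paper's proof. The paper argues purely locally in $(r,\theta)$. It asserts $(L_VL_Vg)_{rr}=\frac{2r^2}{E+r^2}(\partial_\theta v)^2$, concludes $v=v(r)$, and then removes the $r$-dependence using the $r\theta$-component. Your formula $(L_VL_Vg)_{ss}=2\psi^2v_s^2$ is the right one; the paper has the wrong derivative. So what the $rr$ (equivalently $ss$) component actually eliminates is the radial dependence, leaving $v=v(\theta)$.

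For $v=v(\theta)$ the $\theta\theta$-component is just the one-dimensional reduction, and it imposes nothing locally. For example, $V=\theta\,\partial_\theta$ on a sector away from the tip has $L_Vg=2\psi^2\,d\theta^2$ and $L_VL_Vg=2L_Vg$, so it is mixed Killing with $f\equiv 2$ but not Killing. Hence the statement is false for genuinely local fields, and a global hypothesis like the one you add cannot be avoided.

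Your tip argument is complete. It uses degree-one homogeneity of the Cartesian components, differentiability at the origin, and orthogonality to $x\partial_x+y\partial_y$. It uses the mixed Killing condition only through the $ss$-component, and at points where $L_Vg=0$ one has $v_s=0$ directly. So it works even under the paper's convention of imposing the condition only off $\{L_Vg=0\}$.

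Your backup route also closes without much effort. Where $v\neq0$, the relation $v''=\frac{f-2v'}{v}\,v'$ is a linear ODE for $v'$. So on each component of $\{v\neq0\}$, either $v'\equiv0$ or $v'$ never vanishes. By $2\pi$-periodicity, an arc on which $v\neq0$ and $v=0$ at the ends can be neither a nonzero constant nor strictly monotone. If $v$ has no zeros, it has a critical point, which forces $v'\equiv0$. Note, however, that this route needs $f$ to be continuous on the whole annulus, which the paper's off-$\{L_Vg=0\}$ convention does not supply. The tip argument is therefore the one to rely on, and it is also the setting needed downstream for Theorem~\ref{bmkv} on $\mathbb{R}^2$.
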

\begin{proof}
Write the Cigar metric   \eqref{Cigar1}   in polar coordinates as
\[
g_{rr}=\frac{1}{E+r^2}=:u(r),\qquad g_{\theta\theta}=u(r)\,r^2,\qquad g_{r\theta}=0.
\]
Let $V=v(r, \theta)\partial_\theta$, so $V^r=0$, $V^\theta=v(r, \theta)$. Using 
$$(L_V g)_{ij}=V^k\partial_k g_{ij}+g_{kj}\partial_i V^k+g_{ik}\partial_j V^k,$$ one computes that the  components of $L_V g$ are:
\[
(L_V g)_{r\theta}=(L_V g)_{\theta r}=g_{\theta\theta}\,\partial_r V^\theta
= u(r)\,r^2   \frac{{\partial}}{{\partial r}} v(r, \theta).
\]
Thus, 
$$  (L_V g)_{r\theta}=(L_V g)_{\theta r}  =  \frac{r^2}{E+r^2}   \frac{{\partial}}{{\partial r}} v(r, \theta).$$
And $(L_V g)_{rr}= 0$,
\[
(L_V g)_{\theta\theta} = \frac{r^2}{E+r^2}   \frac{{\partial}}{{\partial \theta}} v(r, \theta).\]
$$(L_VL_Vg)_{rr}= 2 \frac{r^2}{E+r^2}   (\frac{{\partial}}{{\partial \theta}} v(r, \theta))^2.$$
If $V$ is  mixed-Killing, then 
$$  (L_VL_Vg)_{rr} =  f (L_V g)_{rr}= 0,$$
 This implies that $v(r, \theta) = v(r).$\\
Now, to see that $v(r)$ is constant, we proceed as follows.  
Applying $L_V$ again, one finds that $L_VL_Vg$ has no $r\theta$-component,
as $v(r, \theta)$ does not depend on $\theta$. That is
$(L_VL_Vg)_{r \theta} = 0$, but then mixed Killing condition implies that 
 $(L_V g)_{r\theta} = 0$. Consequently, $v(r, \theta)$ also does not depend 
 on $r$ and thus $v(r, \theta)$ is constant.  \\
 Thus, $V =  C  \frac{{\partial}}{{\partial \theta}}$  is mixed Killing.
 But, then the above considerations show that all the components of  $L_V g$  
 vanish and this gives $V =  C  \frac{{\partial}}{{\partial \theta}}$
 is rotational Killing.
\end{proof}

\begin{remark}
Unlike the product case, angular symmetry exhibits strong rigidity here; mixed Killing implies genuine Killing.
\end{remark}

%%%%%%%%%%%%%%%%%%%%%%%%%%%%%%%%%%%%%%%%%%%%%%%%%%%%%%%%%%%%
\subsubsection{Radial fields}
%%%%%%%%%%%%%%%%%%%%%%%%%%%%%%%%%%%%%%%%%%%%%%%%%%%%%%%%%%%%

\begin{theorem}[Radial mixed Killing fields]\label{thm:radial-Cigar}
Let
\[
V = w(s)\,\partial_s,
\]
be a complete smooth local vector field. 
Then $V$ is a mixed Killing vector field 
if and only if   
\begin{equation}\label{eq:radial-classification}
w(s)^2 = A\,\psi(s)^2 + B,
\end{equation}
for constants $A,B\in\mathbb{R}$.
Moreover:
\begin{enumerate}
\item $V$ is conformal if and only if $B=0$,
\item if $B\neq 0$, then $V$ is mixed Killing but not conformal,
\item  at tip of the Cigar soliton and near zeros $s_{0}$ of  $\psi(s)$,  $V$ 
extends smoothly as  \[
V = w(s)\,\partial_s,
\] where $ w(s)^2 = A\, s^2 + B,$ and $ w(s)^2 = A\, (s - s_{0}) ^2 + B,$ 
respectively.
\end{enumerate}
\end{theorem}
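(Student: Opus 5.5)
The plan is to compute $L_Vg$ and $L_VL_Vg$ directly in the geodesic polar form \eqref{iso1}, $g=ds^2+\psi(s)^2d\theta^2$, and to reduce the mixed Killing condition to a single first-order ODE relating $w$ and $\psi$. For $V=w(s)\partial_s$, the coordinate formula for the Lie derivative (as in the proof of Theorem \ref{thm:Cigar-angular}) gives
\[
L_Vg = 2w'\,ds^2 + 2w\psi\psi'\,d\theta^2 .
\]
This tensor is diagonal, with no mixed term. Applying $L_V$ once more, with $L_V(a(s)\,ds^2)=(wa'+2w'a)\,ds^2$ and $L_V(b(s)\,d\theta^2)=wb'\,d\theta^2$, I expect
\[
L_VL_Vg=(2ww''+4w'^2)\,ds^2 + 2w\,(w\psi\psi')'\,d\theta^2 .
\]

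The mixed Killing condition \eqref{e1} then consists of two scalar equations with the same factor $f$. On the open set where $w'\neq0$ and $w\psi\psi'\neq0$, I will eliminate $f$ exactly as in Proposition \ref{pro:1D-reduction}:
\[
\frac{ww''}{w'}+2w' = \frac{w\,(w\psi\psi')'}{w\psi\psi'} = w' + w\,\frac{(\psi\psi')'}{\psi\psi'} .
\]
Dividing by $w$ and rearranging should give $\frac{(ww')'}{ww'}=\frac{(\psi\psi')'}{\psi\psi'}$, so $ww'=c\,\psi\psi'$. Integrating yields $w^2=A\psi^2+B$ with $A=c$. Conversely, if $w^2=A\psi^2+B$, reversing these steps produces the common factor $f=\frac{ww''}{w'}+2w'$.

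The main obstacle is the degenerate set, where $w'=0$ or $w\psi\psi'=0$. I plan to handle it as follows. If $w'\equiv0$ on an open interval, then $w$ is constant there, which is the case $A=0$, $w^2=B$; the $ds^2$-component vanishes identically and $f$ is determined by the $d\theta^2$-component alone. If $w\equiv0$, then $V$ vanishes. Since $\psi=\tanh s$ has $\psi'>0$ and $\psi>0$ for $s>0$, the remaining zeros are isolated. I will then use the note after Corollary \ref{cor:homothetic-mixed-killing}, that one works off $\{L_Vg=0\}$, together with continuity of $w^2-A\psi^2$, to pass from the generic open set to the whole interval. I also need to check that the constants cannot jump between components of the generic set; analyticity of $\psi$ and smoothness of $w$ across isolated points should prevent this, but this is the step I would verify most carefully.

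For item (1), $V$ is conformal if and only if the two diagonal coefficients of $L_Vg$ are proportional to $g$ with a common factor. This means $w'=w\psi'/\psi$, so $w=C\psi$, hence $w^2=C^2\psi^2$ and $B=0$. Conversely, $B=0$ forces $w=\pm\sqrt{A}\,\psi$, which is conformal. Item (2) follows immediately from (1). For item (3), near the tip $s=0$ (and near any zero $s_0$ of $\psi$) I will use $\psi(s)\sim s$ (resp.\ $\psi\sim s-s_0$, since $\psi'(s_0)=1$). Substituting this into $w^2=A\psi^2+B$ gives the stated local forms $w^2\approx As^2+B$ and $w^2\approx A(s-s_0)^2+B$, in line with the discussion of the locally Euclidean behaviour of the Cigar near zeros of $\tanh s$.
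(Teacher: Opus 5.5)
Your proposal is essentially the paper's own proof. It uses the same formulas for $L_Vg$ and $L_VL_Vg$ in the coordinates $(s,\theta)$, eliminates $f$ to reach $(ww')'/(ww')=(\psi\psi')'/(\psi\psi')$, integrates to $w^2=A\psi^2+B$, and gets $w=c\psi$ for conformality.

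The degenerate-set step you flag closes by continuity alone, without analyticity; the paper skips it by asserting that $w,w'\neq0$ off $\{L_Vg=0\}$. On a component of $\{ww'\neq0\}\cap\{s>0\}$ your elimination gives $ww'=A\psi\psi'$ with $A\neq0$. An endpoint $c>0$ of that component lying in the domain would then give $0=w(c)w'(c)=A\psi(c)\psi'(c)\neq0$. Hence either $ww'\equiv0$ (the case $A=0$), or a single pair $(A,B)$ works on the whole interval. Your aside that the remaining zeros are isolated is not needed and is not justified in general.

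One caveat, which the paper shares, concerns item (3). Substituting $\psi\sim s$ only gives the asymptotics of $w^2$; it does not prove a smooth extension. At the tip such an extension in fact fails for $B\neq0$. There $\partial_s$ is the $g$-unit radial field, so $w\,\partial_s$ extends even continuously to $r=0$ only if $w(s)\to0$ as $s\to0$, i.e.\ $B=0$.
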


\begin{proof}
Here we will use a Cigar Ricci-Bourguignon metric (\ref{Cigar1})
given by
\begin{equation*}
g_{\mathrm{Cigar-RB}}(t) =  ds^2 + \psi(s)^2\,d\theta^2, \qquad \psi(s)=\tanh s.
\end{equation*}
For the simplicity of computations, here we work in this metric.  In the sequel, we denote by $'$  derivative with respect to $s$  and 
note that  $\psi'$ never vanishes.  
A direct computation yields
\[
L_V g = 2w'\,ds^2 + 2w\psi\psi'\,d\theta^2,
\]
and
\[
L_V L_V g = (2ww'' + 4(w')^2)\,ds^2 + \bigl(2ww'\psi\psi' + 2w^2((\psi')^2 + \psi\psi'')\bigr)\,d\theta^2.
\]
The mixed Killing condition $L_V L_V g = f\,L_V g$ is equivalent to solving the system
\[
\begin{cases}
2ww'' + 4(w')^2 = f \cdot 2w' ,\\
2ww'\psi\psi' + 2w^2((\psi')^2 + \psi\psi'') = f \cdot 2w\psi\psi'.
\end{cases}
\]
Without loss of generality we may assume that $\psi(s) \neq 0$, because  if 
$\psi(s_{0}) = 0$, then from our earlier discussion
(see subsection \ref{C-PC}) We know that  in the neighbourhood
of $s_{0}$ we have $\psi(s) = s - s_{0}$ and $\psi'(s) = 1$. Then we can work on this neighbourhood.\\
 Eliminating $f$ from these equations and simplifying (using  $(\psi\psi')' = (\psi')^2 + \psi\psi''$), we obtain:  
\[
\frac{d}{ds}\log(w'w) = \frac{d}{ds}\log(\psi\psi').
\]
Note that the above expression makes sense, as in the 
complement of $L_{V}g = 0$, $w, w'$ don't vanish.
Integrating,
\[
w'w = C\,\psi\psi',
\]
and hence
\[
\frac{1}{2}(w^2)' = C\,\psi\psi',
\]
 and this again implies that
\[
w^2 = A\,\psi^2 + B
\]
for constants $A,B\in\mathbb{R}$.\\
\noindent 
Conversely, substituting $w^2 = A\psi^2   + B$ into the expressions for $L_V g$ and $L_V L_V g$ shows that the proportionality relation $L_V L_V g = f\,L_V g$ holds on the open  set under consideration. \\ Finally, the conformality condition $L_V g = 2\lambda g$ requires $w' = \lambda$ and $w\psi\psi' = \lambda\psi^2$, which forces $w = c\psi$ and therefore $B = 0$.          
\end{proof}

\begin{corollary}
Let 
\[
V = w(s, \theta)\,\partial_s,
\]
be a complete smooth local vector field on a complete Cigar Ricci--Bourguignon soliton. 
Then $V$ is a mixed Killing vector field  
if and only if $w(s, \theta)$ is a function of  $w(s)$ only.
\end{corollary}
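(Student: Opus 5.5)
The plan mirrors the proof of Theorem~\ref{thm:Cigar-angular}, now for radial directions. Work in geodesic polar coordinates \eqref{iso1}, $g = ds^2+\psi(s)^2d\theta^2$ with $\psi=\tanh s$, and write $V=w(s,\theta)\partial_s$, so that $V^s=w$ and $V^\theta=0$. The formula $(L_Vg)_{ij}=V^k\partial_kg_{ij}+g_{kj}\partial_iV^k+g_{ik}\partial_jV^k$ gives
\[
(L_Vg)_{ss}=2w_s,\qquad (L_Vg)_{s\theta}=w_\theta,\qquad (L_Vg)_{\theta\theta}=2w\psi\psi'.
\]
Setting $h:=L_Vg$, we apply the same formula with $h$ in place of $g$ to compute $L_VL_Vg=L_Vh$. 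The term we need is
\[
(L_Vh)_{\theta\theta}=w\,\partial_s h_{\theta\theta}+2h_{s\theta}\,w_\theta
=w\,\partial_s(2w\psi\psi')+2w_\theta^2 .
\]
The component $(L_Vh)_{s\theta}=w\,\partial_sw_\theta+h_{s\theta}w_s+h_{ss}w_\theta=ww_{s\theta}+3w_sw_\theta$ will also be used.

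The key step is to extract $w_\theta=0$ from the mixed Killing condition, working as in the angular case on the open set where $L_Vg\neq0$. The condition $L_Vh=fh$ gives three scalar equations. The $ss$ and $\theta\theta$ equations are
\[
2ww_{ss}+4w_s^2+\text{(terms from }w_\theta)=2fw_s,\qquad 2w\psi\psi' w_s+2w^2(\psi\psi')'+2w_\theta^2=2fw\psi\psi'.
\]
The $s\theta$ equation is $ww_{s\theta}+3w_sw_\theta=fw_\theta$. I would eliminate $f$ between the $s\theta$ equation and the $ss$ (or $\theta\theta$) equation to obtain a first-order relation in $\theta$. The aim is to show that on any open set where $w_\theta\neq0$ these relations are incompatible. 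Differentiating the $\theta\theta$ relation in $\theta$ and substituting $f$ from the $s\theta$ relation should force $w_\theta^2$ to satisfy a homogeneous linear equation, and at the tip that equation admits only $w_\theta\equiv0$.

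I expect this elimination to be the main obstacle, since the PDE system is not obviously overdetermined enough. As a fallback I would use smoothness at the tip $s=0$. There a radial field $w\partial_s$ extends smoothly only if $w(0,\theta)=0$ and $\partial_s w(0,\theta)$ is independent of $\theta$; otherwise it is not the restriction of a smooth field on $\mathbb{R}^2$. Combining this with the equations above, viewed as analytic ODEs in $s$ with $\theta$ as a parameter, should propagate $\theta$-independence outward from the tip, using uniqueness of solutions with the given initial data. Completeness of $V$ on the complete surface ensures that $w$ is defined along every ray, so the conclusion holds globally.

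Once $w=w(s)$ has been established, the converse direction and the explicit form follow directly from Theorem~\ref{thm:radial-Cigar}: $w^2=A\psi^2+B$. In particular every such field is mixed Killing, which completes the equivalence.
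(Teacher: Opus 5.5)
\textbf{There is a genuine gap.} The decisive step, that the mixed Killing condition forces $w_\theta=0$, is never carried out. The main route only says what the elimination ``should'' give, and the fallback does not work.

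Your first elimination is the right move, but it produces something other than what you predict. (Also, $(L_VL_Vg)_{ss}=2ww_{ss}+4w_s^2$ exactly: since $V^\theta=0$ there are no $w_\theta$ terms.) On the open set where $w\,w_s\,\psi\psi'\neq0$:
\begin{itemize}
\item[(a)] Eliminating $f$ between the $ss$ and $s\theta$ equations gives $(ww_{ss}+2w_s^2)w_\theta=(ww_{s\theta}+3w_sw_\theta)w_s$, i.e.\ $\partial_s(ww_\theta/w_s)=0$, so $ww_\theta=c(\theta)\,w_s$.
\item[(b)] Eliminating $f$ between the $ss$ and $\theta\theta$ equations gives
\[
\partial_s\log\Bigl|\frac{w\,w_s}{\psi\psi'}\Bigr|=\frac{w_\theta^2}{w^2\,\psi\psi'} .
\]
\end{itemize}
Here $w_\theta^2$ is a nonlinear source term, and differentiating the $\theta\theta$ relation in $\theta$ only brings in $f_\theta$. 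No closed homogeneous linear equation for $w_\theta^2$ appears. The real work is to show that $c\not\equiv0$ is incompatible with (b), and the proposal does not do this.

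Unlike the angular case, none of the components $2w_s$, $w_\theta$, $2w\psi\psi'$ of $L_Vg$ vanishes identically. So mirroring that argument gives nothing for free; the paper's proof merely appeals to it, so it does not fill this step for you either.

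The tip fallback cannot supply the step, for four reasons:
\begin{itemize}
\item[(i)] the statement concerns local fields, which need not be defined near $s=0$;
\item[(ii)] the relations are not ODEs in $s$ with $\theta$ as a parameter, since they involve $w_\theta$ and the unknown $f$;
\item[(iii)] $s=0$ is a singular point ($\psi(0)=0$, $w(0,\theta)=0$), where uniqueness theorems do not apply;
\item[(iv)] $w$ is only assumed smooth, not analytic.
\end{itemize}

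One way to close the gap is as follows. Suppose $c\neq0$ on an interval, and let $C$ be an antiderivative of $c$. Then (a) says that $s+C(\theta)/w$ is a function of $w$ alone, $s=G(w)-C(\theta)/w$, and $(w,s)$ are local coordinates. Substituting into (b) turns it into
\[
P\bigl(w^3G'-w^4G''+3w^2(G-s)\bigr)-P'\,w^2\bigl(G+wG'-s\bigr)^2=c(\theta)^2,\qquad P:=\psi\psi'=\tanh s\,\mathrm{sech}^2 s,
\]
with $P'=dP/ds$ and $G',G''$ taken in $w$. The right-hand side is a function of $C=w(G-s)$ alone. Apply the field $w\partial_w+(G+wG'-s)\partial_s$, which annihilates $C$. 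For each fixed $w$ this yields an identity
\[
a_0P+a_1P'+a_2P''=0,
\]
with coefficients polynomial in $s$ and $a_2=-w^2(G+wG'-s)^3\not\equiv0$. By analytic continuation the identity holds on $\mathbb{C}$. But $P$ has poles of order exactly $3$ at $s=i\pi(n+\tfrac12)$, so $P''$ has poles of order $5$ there which nothing else can cancel. This forces the cubic $a_2$ to vanish at infinitely many points, a contradiction. Hence $c\equiv0$, and so $w_\theta=0$ on this set.

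The remaining case, $w_s\equiv0$ on an open set, gives $f=0$ wherever $w_\theta\neq0$, and then $w^2(\psi\psi')'=-w_\theta^2$. This is impossible because $(\psi\psi')'$ is not constant on any interval.

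Finally, your last sentence overstates the converse. By Theorem~\ref{thm:radial-Cigar}, $w(s)\,\partial_s$ is mixed Killing only when $w^2=A\psi^2+B$; for instance $s\,\partial_s$ is not. So the equivalence you can prove is with ``$w=w(s)$ and $w^2=A\psi^2+B$'', not with $\theta$-independence alone.
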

\begin{proof}
Comparing  the Lie derivative components and using the mixed Killing condition, the argument, similar to the proof of the Theorem \ref{thm:Cigar-angular}, implies that 
$w$ does not depend on $\theta$.
  \end{proof}

\begin{corollary}\label{mkc} {(\bf Local characterization of complete mixed Killing vector fields in Cigar metric)}:
Let 
\[
V =   v(r, \theta)\,\partial_\theta +  w(s, \theta)\,\partial_s,
\]
be any complete smooth local vector field on a complete Cigar Ricci--Bourguignon soliton. 
Then $V$ is a mixed Killing vector field  
if and only if  $v(r, \theta) = C,$ constant and 
 $w(s, \theta) = \sqrt{(A\,\psi(s)^2 + B)}$. Thus,
\begin{eqnarray}\label{mkvf}
 V =  C   \partial_{\theta}  +   \sqrt{(A\,\psi(s)^2 + B)}  \partial_s. 
\end{eqnarray}
Moreover:
\begin{enumerate}
\item $V$ is conformal if and only if $B=0$,
\item if $B\neq 0$, then $V$ is mixed Killing but not conformal,
\item   at the tip of the Cigar soliton and near zeros $s_{0}$ of  $\psi(s)$,  $V$ 
extends smoothly as  
$$ V =    C   \partial_{\theta}  +   \sqrt{(A\, (s-s_{0})^2 + B)}  \partial_s,  $$
respectively.
\end{enumerate}
\end{corollary}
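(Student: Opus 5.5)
The plan is to prove the two directions separately, working throughout in the geodesic polar form \eqref{iso1}, $g=ds^2+\psi(s)^2d\theta^2$, and writing $V=w(s,\theta)\,\partial_s+v(s,\theta)\,\partial_\theta$. The radial variable is $s$, since $r=\sqrt{E}\sinh s$ is a reparametrisation, so $v(r,\theta)$ and $v(s,\theta)$ describe the same data. As in the rest of this subsection, we work on the open set where $L_Vg\neq 0$.

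\emph{Sufficiency.} Set $W=w(s)\,\partial_s$ with $w^2=A\psi^2+B$ and $K=C\,\partial_\theta$. Then $K$ is Killing, so $L_Kg=0$. Since $w$ is independent of $\theta$, we also have $[K,W]=0$. Hence $L_Vg=L_Wg$, and
\[
L_VL_Vg=L_WL_Wg+C\,L_{\partial_\theta}L_Wg .
\]
Because the Lie derivatives along commuting fields commute, $L_{\partial_\theta}L_Wg=L_WL_{\partial_\theta}g=0$. Therefore $L_VL_Vg=L_WL_Wg=f\,L_Wg=f\,L_Vg$, where $f$ is the factor supplied by Theorem~\ref{thm:radial-Cigar}. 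The same theorem gives items 1--3 directly, since $L_Vg=L_Wg$; in particular $V$ is conformal if and only if $W$ is, i.e.\ if and only if $B=0$. The local form near the tip and near the zeros $s_0$ follows from $\psi(s)\sim s-s_0$, exactly as in Theorem~\ref{thm:radial-Cigar}.

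\emph{Necessity.} I will compute the components of $L_Vg$ for the general field:
\[
(L_Vg)_{ss}=2w_s,\qquad (L_Vg)_{s\theta}=w_\theta+\psi^2v_s,\qquad (L_Vg)_{\theta\theta}=2\psi\psi'w+2\psi^2v_\theta .
\]
I then compute the three components of $L_VL_Vg$, using $(L_VT)_{ij}=V(T_{ij})+T_{kj}\partial_iV^k+T_{ik}\partial_jV^k$ for a symmetric tensor $T$. The mixed Killing condition is the statement that the $2\times 2$ symmetric matrices $L_VL_Vg$ and $L_Vg$ are proportional. This gives two independent polynomial PDE relations between $w$, $v$ and their derivatives, obtained by eliminating $f$ (cross-multiplying the $ss$/$s\theta$ and $ss$/$\theta\theta$ pairs).

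The strategy then follows the angular and radial arguments. First use the $ss$-component, as in the proof of Theorem~\ref{thm:Cigar-angular}, to force the $\theta$-derivatives to vanish. Next use the $s\theta$-component to get $w_\theta+\psi^2v_s=0$. Once $w_\theta=0$ this forces $v_s=0$, and the $\theta\theta$-equation together with periodicity in $\theta$ (completeness) forces $v_\theta=0$. At that point $V=C\partial_\theta+w(s)\partial_s$, and the commutation argument above reduces the condition to Theorem~\ref{thm:radial-Cigar}, which yields $w^2=A\psi^2+B$.

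The main obstacle is the first step. The mixed Killing condition is nonlinear in $V$, so cross terms between the angular and radial parts do not decouple, and the clean component-vanishing arguments of the two earlier theorems do not apply verbatim. I would first try to isolate the $\theta$-dependence by differentiating the eliminated relations in $\theta$ and integrating over a circle $\{s=\text{const}\}$. Completeness makes $w$ and $v$ $2\pi$-periodic in $\theta$, so exact $\theta$-derivatives integrate to zero, and an energy-type identity of the form $\int_0^{2\pi}(w_\theta^2+\psi^2v_\theta^2)\,d\theta=0$ would give $w_\theta=v_\theta=0$. If that identity does not close, I would fall back on a Fourier expansion in $\theta$ and compare the top-frequency modes, which should be forced to vanish.
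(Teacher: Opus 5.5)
Your sufficiency argument is correct. It is also more than the paper provides: the paper gives no proof of this corollary beyond placing Theorem~\ref{thm:Cigar-angular} next to Theorem~\ref{thm:radial-Cigar}. Your observation that $C\,\partial_\theta$ is Killing and commutes with $W=w(s)\,\partial_s$, so that $L_Vg=L_Wg$ and $L_VL_Vg=L_WL_Wg$, is exactly what justifies superposing the two parts in the ``if'' direction. It gives items 1 and 2 as well. Your worry that the cross terms do not decouple in the other direction is well founded.

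The necessity half, however, is not just unfinished: it cannot be completed, because the ``only if'' direction is false. So the step ``use the $ss$-component to force the $\theta$-derivatives to vanish'' must fail, and no energy identity or Fourier argument can rescue it. Two counterexamples:

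(a) Take $V=v(\theta)\,\partial_\theta$ with $v'\neq 0$. Then $L_Vg=2\psi^2v'\,d\theta^2\neq0$ and $L_VL_Vg=2\psi^2(vv''+2v'^2)\,d\theta^2$, so $V$ is mixed Killing with $f=vv''/v'+2v'$. For $v=\theta$ on a sector this gives $L_VL_Vg=2L_Vg$. This is just the reduction of Proposition~\ref{pro:1D-reduction} with $T=\psi^2d\theta^2$. It also shows that the proof of Theorem~\ref{thm:Cigar-angular}, which you wanted to imitate, is wrong. For $V=v(r,\theta)\partial_\theta$ one has $(L_VL_Vg)_{rr}=\frac{2r^2}{E+r^2}(\partial_r v)^2$, not $(\partial_\theta v)^2$, so that component only kills $\partial_r v$.

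(b) Take the complete field $V=x\partial_x+y\partial_y+\epsilon\,\partial_x$ with $\epsilon\neq0$. It is conformal with $L_Vg=2\lambda g$, where $\lambda=(E-\epsilon x)/(E+r^2)>0$ on the disc $\{r<E/|\epsilon|\}$. By Lemma~\ref{lem:conformal-MK-main} it is therefore mixed Killing, with smooth factor, on a disc containing the tip and whole circles $\{s=\mathrm{const}\}$. Yet
\[
V=\Bigl(\tanh s+\frac{\epsilon\cos\theta}{\sqrt{E}\cosh s}\Bigr)\partial_s-\frac{\epsilon\sin\theta}{\sqrt{E}\sinh s}\,\partial_\theta .
\]
Here $w_\theta+\psi^2v_s=0$ holds, as it must since $L_Vg$ is pure trace, but neither term vanishes. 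Moreover $\int_0^{2\pi}(w_\theta^2+\psi^2v_\theta^2)\,d\theta=2\pi\epsilon^2/(E\cosh^2 s)>0$. So the periodicity you hoped to exploit is available here and still does not help. Note also that the set $\{L_Vg\neq0\}$ you restrict to typically contains no full circle; in (a) with periodic $v$, for instance, $v'$ must vanish on each circle. Already $V=\partial_x$ on $\{x\neq0\}$ contradicts the corollary, and the paper itself lists that field in $\mathfrak{MK}(g)$ in Theorem~\ref{bmkv}.

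Item 3 also does not follow for $B\neq0$. On the real domain $\psi=\tanh s$ vanishes only at the tip. Near the tip, $\partial_s=\coth s\,(x\partial_x+y\partial_y)\approx\sqrt{E}\,(\cos\theta\,\partial_x+\sin\theta\,\partial_y)$, which has no limit as $r\to0$. Hence $w\,\partial_s$ with $w(0)=\sqrt{B}\neq0$ has no continuous extension to the tip. For $B=0$ the field is $\sqrt{A}\,(x\partial_x+y\partial_y)$, which is smooth. What can actually be proved is therefore the ``if'' direction with items 1--2, and item 3 only when $B=0$.
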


\vspace{0.4in}

Let $U_1 = \left((S^1 \setminus (0, 1)) \times (0, \infty)\right)$, $U_2  = \left((S^1 \setminus (0, -1)) \times (0, \infty)\right)$ be the covering of Cigar surface $\Sigma^2 = S^1 \times (0, \infty)$.
From the above corollary, we conclude that:

\begin{theorem}\label{mkc-g} {(\bf Global  characterization of complete mixed Killing vector fields in Cigar metric)}:
Let $V$ be a complete mixed Killing vector field on a Cigar Ricci--Bourguignon soliton $(\Sigma^2 = S^1 \times (0, \infty), g, f) $. 
Then 
\begin{eqnarray}
V & = & \sum_{i=1}^{2} C_i   \partial_{{\theta}_i}  +   \sqrt{(A_i\,\psi(s_{i})^2 + B_i)}  \partial_{s_i}, \\ \nonumber
& = &  V_1 + V_2,
\end{eqnarray}
where $\left(\Omega_1, \partial_{{\theta}_1},  \partial_{s_1} \right)$, $\left(\Omega_2,  \partial_{{\theta}_2}, \partial_{s_2}\right)$ denote the covering by charts of the complement in $\Sigma^2$ of the set $L_{V}g \neq 0$. 
Moreover, 
\begin{enumerate}
\item The constants $C_i, A_i, B_i$ are unique upto diffeomorphisms
of $\Omega_1, \Omega_2$ respectively. And $V_1, V_2$ coincide on
$\Omega_1 \cap \Omega_2$. 
\item $V$ is conformal if and only if $B_i=0$ for at least one 
$i =1,2$.
\item if $B_i\neq 0$ for any one of $i$, then $V$ is mixed Killing but not conformal,
\item   at the tip of the Cigar soliton and near zeros $s_{0}$ of  ${\psi}(s_i)$,  $V$ 
extends smoothly as  
$$ V =\sum_{{i=1}^{2}} C_{i} \partial_{{\theta}_{i}}   +   \sqrt{(A\, (s_{i}-s_{0})^2 + B)}  \partial_{s_{i}}, $$
respectively.
\end{enumerate}
\end{theorem}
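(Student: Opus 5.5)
The plan is to deduce Theorem \ref{mkc-g} from Corollary \ref{mkc} by a chart-by-chart argument followed by a gluing step. First, I cover the region where $V$ is strictly mixed Killing (the complement of $\{L_V g = 0\}$, following the Note after Corollary \ref{cor:homothetic-mixed-killing}) by the two angular charts $U_1, U_2$. On each, the polar angle $\theta_i$ is a single-valued smooth coordinate and $s_i$ is the geodesic distance from the tip.

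On each intersection $\Omega_i$ of that region with $U_i$, I write $V = v_i(r,\theta_i)\,\partial_{\theta_i} + w_i(s_i,\theta_i)\,\partial_{s_i}$. Then I apply Corollary \ref{mkc}. Concretely, the $rr$-component argument of Theorem \ref{thm:Cigar-angular}, together with the $\theta$-independence Corollary, forces $v_i \equiv C_i$. The ODE computation of Theorem \ref{thm:radial-Cigar}, in which eliminating $f$ gives $(w_iw_i')' / (w_iw_i') = (\psi\psi')'/(\psi\psi')$, then forces $w_i^2 = A_i\psi(s_i)^2 + B_i$. This yields the local expressions $V_i$, and $V = V_1 + V_2$ is read as "equal to $V_i$ on $\Omega_i$", via a partition of unity subordinate to the cover if one insists on a literal sum.

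\textbf{Uniqueness (item 1).} Since $V$ is one globally defined field, $V_1 = V_2$ on $\Omega_1 \cap \Omega_2$. On the overlap the transition map is $\theta_2 = \theta_1 + \text{const}$ and $s_2 = s_1$. Hence $\partial_{\theta_1} = \partial_{\theta_2}$ and $\partial_{s_1} = \partial_{s_2}$, and matching coefficients gives $C_1 = C_2$ and $A_1\psi^2 + B_1 = A_2\psi^2 + B_2$ on an open $s$-interval. Because $\psi^2 = \tanh^2$ is nonconstant there, $A_1 = A_2$ and $B_1 = B_2$. So the constants are determined up to the choice of charts, i.e. up to these diffeomorphisms.

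\textbf{Items 2--4.} These follow from the corresponding items of Theorem \ref{thm:radial-Cigar} and Corollary \ref{mkc} applied on each $\Omega_i$. By the matching just shown, $B_i = 0$ for one $i$ is equivalent to $B_i = 0$ for both. Conformality, being a local tensorial condition $L_Vg = 2\lambda g$, holds globally iff it holds on each chart. The smooth extension at the tip and near the zeros of $\psi$ uses $\psi(s) \sim s - s_0$, exactly as in Subsection \ref{C-PC}.

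\textbf{Main obstacle.} The delicate point is the gluing and extension across the tip and across the set $\{L_V g = 0\}$. There the local classification gives no information, and $w = \sqrt{A\psi^2 + B}$ must be smooth and give a complete field. I would handle this first by requiring $A\psi^2 + B \geq 0$ and choosing a consistent sign of the square root. I would then check completeness using boundedness of $w$, since $\psi$ is bounded. Finally, I would argue that a smooth vector field agreeing with $C\partial_\theta + w\partial_s$ on an open dense set is determined by continuity.
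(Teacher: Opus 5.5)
\textbf{Verdict: genuine gap.} Your route is the paper's own route: the paper simply asserts that Theorem~\ref{mkc-g} follows from Corollary~\ref{mkc} on the charts $U_1,U_2$. It breaks at the step ``apply Corollary~\ref{mkc} on each $\Omega_i$'', because that local classification is false.

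The operator $V\mapsto L_VL_Vg$ is quadratic in $V$. So for $V=v\,\partial_\theta+w\,\partial_s$ the cross terms $L_{v\partial_\theta}L_{w\partial_s}g+L_{w\partial_s}L_{v\partial_\theta}g$ do not vanish in general, and the two parts cannot be handled separately. The ``if'' direction survives only because $\partial_\theta$ is Killing and commutes with $w(s)\partial_s$. For the ``only if'' direction you need, the $rr$-argument of Theorem~\ref{thm:Cigar-angular} requires $(L_Vg)_{ss}=0$, but here $(L_Vg)_{ss}=2\partial_sw$. Likewise, the ODE of Theorem~\ref{thm:radial-Cigar} presupposes $v\equiv 0$ and $w=w(s)$.

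Even for purely angular fields the rigidity is wrong. For $V=v(s,\theta)\partial_\theta$ one has $(L_VL_Vg)_{ss}=2\psi^2(\partial_sv)^2$, not $(\partial_\theta v)^2$. So the mixed Killing condition only forces $v=v(\theta)$, and then
\[
L_Vg=2\psi^2v'\,d\theta^2,\qquad L_VL_Vg=2\psi^2\bigl(vv''+2(v')^2\bigr)\,d\theta^2 .
\]
Hence every such field is mixed Killing on $\{v'\neq 0\}$, exactly as in Proposition~\ref{prop:product-mixed-killing}. Two explicit counterexamples to the local form:
\begin{itemize}
\item $\theta_1\partial_{\theta_1}$ on $U_1$ satisfies $L_VL_Vg=2L_Vg$, with $L_Vg=2\tanh^2 s\,d\theta_1^2\neq 0$ throughout $U_1$.
\item $\partial_x=\frac{\cos\theta}{\sqrt E\cosh s}\partial_s-\frac{\sin\theta}{\sqrt E\sinh s}\partial_\theta$ has $L_{\partial_x}g=-\frac{2x}{E+r^2}\,g$. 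By Lemma~\ref{lem:conformal-MK-main} it is mixed Killing on $\{x\neq 0\}$, and it even appears in Theorem~\ref{bmkv}.
\end{itemize}
Neither has the form $C\partial_\theta+\sqrt{A\psi^2+B}\,\partial_s$.

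There is also a global counterexample under the convention you adopt (work on $\{L_Vg\neq 0\}$). Take $V=(2+\sin\theta)\partial_\theta$. It is complete on $S^1\times(0,\infty)$, since its orbits are the circles $s=\mathrm{const}$. It is mixed Killing on $\{L_Vg\neq0\}=\{\cos\theta\neq 0\}$. Yet its $\partial_\theta$-coefficient is nonconstant on every component of every $\Omega_i$, so no gluing can rescue the statement in this reading. If you instead require a smooth factor $f$ on all of $\Sigma^2$, this example is excluded: at $\theta=\pi/2$, $L_Vg=0$ but $L_VL_Vg=-6\tanh^2 s\,d\theta^2$. You would then need a genuinely global argument that uses smoothness of $f$ across $\{L_Vg=0\}$. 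A chart-by-chart appeal to a false local lemma cannot supply that.

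The extension, completeness and gluing parts of your plan also fail as stated.
\begin{itemize}
\item \emph{Extension to the tip.} Since $\partial_s=\coth s\,(x\partial_x+y\partial_y)$, the field $w(s)\partial_s$ has Euclidean length $\sqrt E\,|w(s)|\cosh s$ and radial direction. It therefore extends even continuously to the tip only if $w(0)=0$, i.e.\ $B=0$. Also, $\psi=\tanh$ has no real zero other than $s=0$, so the ``zeros $s_0$'' in item~4 are vacuous.
\item \emph{Completeness.} Boundedness of $w$ does not give completeness on $S^1\times(0,\infty)$. If $B>0$, then $w(0^+)=\pm\sqrt B\neq 0$, so a field of this form near $s=0$ has integral curves leaving $\Sigma^2$ through $s=0$ in finite time. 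If $B<0$, the formula is not even defined near $s=0$.
\item \emph{Uniqueness of constants.} $C_i,A_i,B_i$ are only locally constant on $\{L_Vg\neq 0\}$. That set can be disconnected and need not be dense, since it misses any open set where $V$ is Killing. So matching on $\Omega_1\cap\Omega_2$ does not fix the constants globally, and the ``open dense'' continuity step is unjustified.
\item \emph{Reading of the sum.} Interpreting $V=V_1+V_2$ as ``$V=V_i$ on $\Omega_i$'' changes the statement; its literal form would give $2V_1$ on the overlap.
\end{itemize}
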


\vspace{0.1in}

\begin{remark}
In view of the characterisation of mixed Killing fields obtained here,
it is natural to ask: Can we characterise the zero set 
of mixed Killing fields?  Note that it is known that the zeros 
of closed conformal vector fields are discrete \cite{MR4648543}.
\end{remark}

\vspace{0.2in}

Let $\mathfrak{MK}(g), \mathfrak{conf}(g)$ respectively
denote the vector space of complete mixed Killing vector fields
and complete conformal vector fields, respectively. From Theorem
\ref{theorem:conf-4d}, we know that
the Lie algebra of complete conformal vector fields on $(\mathbb{R}^2,g)$ is $4$--dimensional and is spanned by
\[
\partial_x,\quad \partial_y,\quad -y\partial_x + x\partial_y,\quad x\partial_x + y\partial_y.
\]

\begin{theorem}({\bf Basis for mixed Killing vector fields}):\label{bmkv}
Let $(\mathbb{R}^2,g)$ be a complete Cigar Ricci--Bourguignon soliton. Then $\mathfrak{MK}(g)$ is spanned by 
\[\mathfrak{conf}(g), \; 
 \sqrt{1+ \frac{E}{x^2+y^2} } \; \left(x \partial_x + y \partial_y\right) .
\]
Hence,  $\dim\mathfrak{MK}(g)=5$.
\end{theorem}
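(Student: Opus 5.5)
The plan is to combine three earlier results: the conformal algebra of Theorem~\ref{theorem:conf-4d}, the local characterisation of Corollary~\ref{mkc} together with its global form Theorem~\ref{mkc-g}, and the radial classification of Theorem~\ref{thm:radial-Cigar}. Everything is done in the geodesic polar coordinates \eqref{iso1}, where $r=\sqrt{E}\sinh s$ and $\psi(s)=\tanh s$.

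\textbf{Inclusion $\mathfrak{conf}(g)\subset\mathfrak{MK}(g)$.} This follows at once from Lemma~\ref{lem:conformal-MK-main}. The four fields $\partial_x,\partial_y,-y\partial_x+x\partial_y,x\partial_x+y\partial_y$ are complete by Theorem~\ref{theorem:conf-4d}.

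\textbf{The fifth field lies in $\mathfrak{MK}(g)$.} Next I translate $W:=\sqrt{1+E/r^2}\,(x\partial_x+y\partial_y)=\sqrt{r^2+E}\,\partial_r$ into the $s$ coordinate. Since $dr/ds=\sqrt{E}\cosh s=\sqrt{r^2+E}$, we have $\partial_s=\sqrt{r^2+E}\,\partial_r$, so $W=\partial_s$. This is the case $w\equiv 1$ of \eqref{eq:radial-classification}, namely $A=0$, $B=1$. By Theorem~\ref{thm:radial-Cigar} it is mixed Killing and, since $B\neq 0$, it is not conformal; in fact $L_Wg=2\psi\psi'\,d\theta^2$, which is not a multiple of $g$. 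Its flow is $s\mapsto s+t$, which is complete in forward time. I would treat completeness as in the paper's convention, checking that the flow lines run off to infinity rather than stopping at a finite point.

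\textbf{Spanning.} Now let $V\in\mathfrak{MK}(g)$. By Corollary~\ref{mkc} and Theorem~\ref{mkc-g}, on each chart $V=C\partial_\theta+\sqrt{A\psi^2+B}\,\partial_s$. The strategy is to show that the only complete radial profiles are $w=a\psi+b$ with constants $a,b$. Then $w\partial_s=a\tanh s\,\partial_s+b\,\partial_s$. The first term is $a\tanh s\,\sqrt{r^2+E}\,\partial_r=a(x\partial_x+y\partial_y)$, which is conformal; the second is $bW$. The rotational part $C\partial_\theta=C(-y\partial_x+x\partial_y)$ is also conformal.

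The translations $\partial_x,\partial_y$ do not have the form in Corollary~\ref{mkc}, because they are not rotationally adapted. I would therefore invoke linearity together with the global patching of Theorem~\ref{mkc-g}: they are conformal, hence already in $\mathfrak{MK}(g)$. Concretely, I would decompose a general $V$ as a conformal field plus a radial remainder, and apply the radial analysis only to that remainder.

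\textbf{Linear independence.} $W$ is not conformal, so it does not lie in the $4$-dimensional span of $\mathfrak{conf}(g)$. This gives $\dim\mathfrak{MK}(g)=5$.

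\textbf{Main obstacle.} The delicate step is the reduction $\sqrt{A\psi^2+B}\in\mathrm{span}\{\psi,1\}$. For general $A,B>0$ the square root is not a linear combination of $\tanh s$ and $1$. Even worse, mixed Killing fields are not closed under addition, since the defining condition is quadratic in $V$, so calling $\mathfrak{MK}(g)$ a ``vector space'' needs justification.

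My first attempt would impose smoothness at the tip $s=0$. There the field $w(s)\partial_s$ is smooth on $\mathbb{R}^2$ only if $w$ is odd in $s$ with $w(0)=0$, which forces $B=0$. That kills $W$ itself, unless one works on $\mathbb{R}^2\setminus\{0\}$, as the paper's convention of removing the set where $L_Vg=0$ suggests.

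So I would adopt that convention and define $\mathfrak{MK}(g)$ as the linear span generated by $\mathfrak{conf}(g)$ and the complete radial mixed Killing fields. I would then show that the radial fields modulo conformal ones reduce to the single direction $\partial_s$. To do this, I would use the asymptotics $\psi\to 1$ as $s\to\infty$ together with the requirement that the flow be complete at both ends, which restricts the pairs $(A,B)$ to the cases $A=0$ or $B=0$. I expect this last restriction to be the weakest point of the argument.
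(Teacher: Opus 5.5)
You follow the same route as the paper. Its proof consists only of the identifications $\partial_\theta=-y\partial_x+x\partial_y$ and $\partial_s=\coth s\,(x\partial_x+y\partial_y)=\sqrt{1+E/r^2}\,(x\partial_x+y\partial_y)$, followed by an appeal to \eqref{mkvf}. The obstacles you flag are real, and the paper does not address them either, but your proposed way around them fails.

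\textbf{The fifth field.} $W=\partial_s$ does not belong to $\mathfrak{MK}(g)$ under any reading of ``complete''. As $r\to0$ its Euclidean components tend to $\sqrt{E}(\cos\theta,\sin\theta)$, so it has no continuous extension to the origin. On $\mathbb{R}^2\setminus\{0\}$ it is smooth, but its flow $s\mapsto s+t$ leaves through the puncture at $t=-s$; you only checked forward time.

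\textbf{The restriction to ``$A=0$ or $B=0$''.} This cannot come from completeness. For $w=\sqrt{A\psi^2+B}$ the flow near the tip sees only $w(0)=\sqrt{B}$, and near infinity only $\sqrt{A+B}$. So completeness never separates $A=0$ from $A\neq0$.
\begin{itemize}
\item Two-sided completeness, and equally smoothness at the tip, forces $B=0$. For $B<0$ the profile is not even defined near the tip. This leaves only $\sqrt{A}\tanh s\,\partial_s=\sqrt{A}\,(x\partial_x+y\partial_y)$, which is already conformal.
\item Any weaker criterion that admits $\partial_s$ also admits, for example, $\sqrt{\tanh^2 s+1}\,\partial_s$. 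Both are smooth on $\{s>0\}$ with profile bounded between positive constants, so their flows behave identically. But $\sqrt{\tanh^2 s+1}$ is not of the form $d\tanh s+e$, so this field is not in your five-dimensional span. In fact the profiles $\sqrt{\tanh^2 s+B}$ for $B>0$ are linearly independent, so the span would be infinite-dimensional.
\end{itemize}
Carried out honestly, your radial analysis gives no fifth direction at all.

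\textbf{The spanning step.} You split a general $V$ into a conformal part plus a radial remainder and apply Theorem~\ref{thm:radial-Cigar} to the remainder. That presupposes the remainder is mixed Killing, but the condition is quadratic in $V$. Concretely, take $V=(x\partial_x+y\partial_y)+W=(1+\tanh s)\,\partial_s$. Here $L_Vg\neq0$ everywhere on $\{s>0\}$, yet $ww'=(1+\psi)\psi'$ is not a constant multiple of $\psi\psi'$. So $V$ fails the mixed Killing condition at every point. The proposed span is therefore not even contained in the set of mixed Killing fields, and redefining $\mathfrak{MK}(g)$ as a span changes the statement rather than proving it.

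Corollary~\ref{mkc} also cannot describe an arbitrary mixed Killing field. You noticed that $\partial_x$ is not of that form. The corollary combines the separately treated angular and radial cases, which the nonlinearity does not permit. So nothing in the earlier results controls a general $V$.

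\textbf{The inclusion $\mathfrak{conf}(g)\subset\mathfrak{MK}(g)$.} This does not follow ``at once'' from Lemma~\ref{lem:conformal-MK-main}, which only applies on $\{\lambda\neq0\}$. For $V=\partial_x$ one has $\lambda=-x/(E+r^2)$. On the line $x=0$ this gives $L_Vg=0$ but $L_VL_Vg=-\frac{2}{E+y^2}\,g\neq0$. So $\partial_x$ is mixed Killing only after discarding $\{L_Vg=0\}$.
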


\begin{proof}
Recall that from \eqref{mkvf}, any complete mixed Killing vector 
field is locally represented as: 
$$V =  C   \partial_{\theta}  +   \sqrt{(A\,\psi(s)^2 + B)} \partial_s.$$ 
Clearly, note that 
 \[\partial_{\theta} = - y \partial_{x} + x \partial_{y}.\]
And 
 \[\partial_{s}  =  \coth s \; (x \partial_{x} + y \partial_{y})
=  \sqrt{1+ \frac{E}{x^2+y^2} } \; \left(x \partial_x + y \partial_y\right).
\]
Hence, the result follows as stated.
\end{proof}

\section{Geodesic structure and geometric context}\label{geocig}
In this section, we completely characterise the geodesics on a Cigar Ricci-Bourguignon soliton.

\vspace{0.1in}

\noindent
In what follows, it is convenient to work with geodesic polar coordinates $(s,\theta)$ 
as discussed in subsection \ref{C-PC}. Recall that

\begin{equation}\label{eq:geodesic-polar-coord}
r = \sqrt{E}\,\sinh s, \qquad s\in[0,\infty),
\end{equation}
in which the metric takes the warped product form
\begin{equation}\label{eq:warped-form}
g = ds^2 + \psi(s)^2\,d\theta^2, \qquad \psi(s)=\tanh s.
\end{equation}

The metric components are diagonal:
\[
g_{ss} = 1, \qquad g_{\theta\theta} = \psi(s)^2, \qquad g_{s\theta} = 0.
\]

%%%%%%%%%%%%%%%%%%%%%%%%%%%%%%%%%%%%%%%%%%%%%%%%%%%%%%%%%%%%
\subsection{Christoffel symbols and geodesic equations}
%%%%%%%%%%%%%%%%%%%%%%%%%%%%%%%%%%%%%%%%%%%%%%%%%%%%%%%%%%%%

For the warped product metric \eqref{eq:warped-form}, the nonzero Christoffel 
symbols are
\[
\Gamma^s_{\theta\theta} = -\psi(s)\psi'(s),
\qquad
\Gamma^\theta_{s\theta} = \Gamma^\theta_{\theta s} = \frac{\psi'(s)}{\psi(s)}.
\]

A geodesic $\gamma(\sigma) = (s(\sigma), \theta(\sigma))$ satisfies
\begin{equation}\label{eq:geod-s-main}
\ddot{s} - \psi(s)\psi'(s)\dot{\theta}^2 = 0,
\end{equation}
\begin{equation}\label{eq:geod-theta-main}
\ddot{\theta} + 2\frac{\psi'(s)}{\psi(s)}\dot{s}\dot{\theta} = 0,
\end{equation}
where $\sigma$ is an affine parameter and dots denote $d/d\sigma$.

%%%%%%%%%%%%%%%%%%%%%%%%%%%%%%%%%%%%%%%%%%%%%%%%%%%%%%%%%%%%
\subsection{Conservation laws}
%%%%%%%%%%%%%%%%%%%%%%%%%%%%%%%%%%%%%%%%%%%%%%%%%%%%%%%%%%%%

The geodesic equations admit two conserved quantities.

\begin{proposition}[Angular momentum]\label{prop:angular-momentum}
Since the metric is independent of $\theta$, the quantity
\begin{equation}\label{eq:angular-momentum-geo}
\ell := \psi(s)^2\dot{\theta} = \tanh^2(s)\,\dot{\theta}
\end{equation}
is constant along geodesics.
\end{proposition}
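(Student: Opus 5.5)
To prove the angular momentum conservation of Proposition \ref{prop:angular-momentum}, I will use the $\theta$-component of the geodesic equations, namely \eqref{eq:geod-theta-main}. I will differentiate $\ell(\sigma)=\psi(s(\sigma))^2\dot\theta(\sigma)$ along an arbitrary geodesic $\gamma(\sigma)=(s(\sigma),\theta(\sigma))$ with respect to the affine parameter. The chain rule gives
\[
\frac{d\ell}{d\sigma}=2\psi\psi'\dot s\,\dot\theta+\psi^2\ddot\theta
=\psi^2\Bigl(\ddot\theta+2\frac{\psi'}{\psi}\dot s\,\dot\theta\Bigr).
\]
The bracket is exactly the left-hand side of \eqref{eq:geod-theta-main}, so it vanishes. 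Hence $\ell$ is constant wherever $\psi(s)\neq 0$, that is, for $s>0$. Since $\psi=\tanh$, this gives the stated formula $\ell=\tanh^2(s)\dot\theta$.

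As a conceptual cross-check, I can derive the same conclusion from the fact that $\partial_\theta$ is a Killing field; this is the rotational Killing field already identified in Theorem \ref{thm:Cigar-angular}. For any Killing field $X$ and geodesic $\gamma$, the quantity $g(X,\dot\gamma)$ is constant, because
\[
\frac{d}{d\sigma}g(X,\dot\gamma)=g(\nabla_{\dot\gamma}X,\dot\gamma)+g(X,\nabla_{\dot\gamma}\dot\gamma)=0.
\]
The first term vanishes by skew-symmetry of $\nabla X$, and the second by the geodesic equation. For the metric \eqref{eq:warped-form}, $g(\partial_\theta,\dot\gamma)=\psi^2\dot\theta=\ell$. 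Equivalently, $\theta$ is a cyclic coordinate of the Lagrangian $\tfrac12(\dot s^2+\psi^2\dot\theta^2)$, and $\ell$ is its conjugate momentum.

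The only delicate point is geodesics through the tip $s=0$, where polar coordinates degenerate. I will handle this with the Killing-field formulation, because $g(X,\dot\gamma)$ is globally smooth along $\gamma$: the field $X=-y\partial_x+x\partial_y$ is smooth on all of $\mathbb{R}^2$. It follows that $\ell$ is constant along the whole geodesic. For such a geodesic, $\ell=0$, since $X$ vanishes at the tip.
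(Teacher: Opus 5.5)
Your proposal is correct and follows the paper's proof, which simply rewrites the $\theta$-geodesic equation as $\frac{d}{d\sigma}(\psi^2\dot\theta)=\psi^2\ddot\theta+2\psi\psi'\dot s\dot\theta=0$. Your additional Killing-field formulation via $g(\partial_\theta,\dot\gamma)$ is a sound supplement, and it also handles geodesics through the tip, which the paper does not address.
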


\begin{proof}
Equation \eqref{eq:geod-theta-main} can be rewritten as
\[
\frac{d}{d\sigma}\left(\psi^2\dot{\theta}\right) 
= \psi^2\ddot{\theta} + 2\psi\psi'\dot{s}\dot{\theta} = 0.
\]
\end{proof}

\begin{proposition}[Speed parameter]\label{prop:speed}
For an affinely parameterised geodesic, the speed
\begin{equation}\label{eq:speed-geo}
k := \dot{s}^2 +  \psi(s)^2 \dot{\theta}^2 = \dot{s}^2 + \ell^2\coth^2 s
\end{equation}
is constant. For unit-speed geodesics, $k=1$.
\end{proposition}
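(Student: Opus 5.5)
The plan is to show directly that $k$ has zero derivative along any affinely parameterised geodesic, and then to substitute the angular momentum from Proposition~\ref{prop:angular-momentum}. Differentiating with respect to $\sigma$ gives
\[
\frac{dk}{d\sigma} = 2\dot s\ddot s + 2\psi\psi'\dot s\,\dot\theta^2 + 2\psi^2\dot\theta\ddot\theta .
\]
I will first use \eqref{eq:geod-s-main} to replace $\ddot s$ by $\psi\psi'\dot\theta^2$. Then I will use \eqref{eq:geod-theta-main} to replace $\ddot\theta$ by $-2\frac{\psi'}{\psi}\dot s\dot\theta$. With these substitutions,
\[
\frac{dk}{d\sigma} = 2\psi\psi'\dot s\dot\theta^2 + 2\psi\psi'\dot s\dot\theta^2 - 4\psi\psi'\dot s\dot\theta^2 = 0 .
\]
An alternative route is to note that $k = g(\dot\gamma,\dot\gamma)$ and $\nabla_{\dot\gamma}\dot\gamma = 0$, so $\frac{d}{d\sigma}g(\dot\gamma,\dot\gamma) = 2g(\nabla_{\dot\gamma}\dot\gamma,\dot\gamma) = 0$. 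This is the cleaner argument, and I would state it first, with the coordinate computation above as a check.

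For the second expression, substitute $\dot\theta = \ell/\psi^2$ from \eqref{eq:angular-momentum-geo} into $\psi^2\dot\theta^2$. This gives $\ell^2/\psi^2$, which equals $\ell^2\coth^2 s$ because $\psi = \tanh s$.

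The only subtle point is that this substitution requires $\psi(s)\neq 0$, that is $s>0$, so the identity should be read away from the tip. At the tip itself $\ell = 0$ for any geodesic passing through it, so the term is interpreted as $0$ there. Finally, unit speed means exactly $g(\dot\gamma,\dot\gamma) = 1$, so $k = 1$ in that case.
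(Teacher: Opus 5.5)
Your proposal is correct and matches the paper: your coordinate computation (substituting \eqref{eq:geod-s-main} and \eqref{eq:geod-theta-main} into $dk/d\sigma$ to get the $2+2-4$ cancellation) is exactly the ``direct computation'' the paper invokes, and substituting $\dot\theta=\ell/\psi^2$ gives the second expression. Your intrinsic argument, $\frac{d}{d\sigma}g(\dot\gamma,\dot\gamma)=2g(\nabla_{\dot\gamma}\dot\gamma,\dot\gamma)=0$, is a sound addition that also covers the tip, where the polar coordinates degenerate.
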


\begin{proof}
Direct computation using equations \eqref{eq:geod-s-main} and \eqref{eq:geod-theta-main} 
shows that $dk/d\sigma = 0$.
\end{proof}

From these conservation laws, we obtain the radial equation
\begin{equation}\label{eq:radial-equation-s}
\dot{s}^2 = k - \ell^2\coth^2 s.
\end{equation}

%%%%%%%%%%%%%%%%%%%%%%%%%%%%%%%%%%%%%%%%%%%%%%%%%%%%%%%%%%%%
\subsection{Classification of geodesics}
%%%%%%%%%%%%%%%%%%%%%%%%%%%%%%%%%%%%%%%%%%%%%%%%%%%%%%%%%%%%

\begin{theorem}[Geodesic classification]\label{thm:geodesic-classification-main}
Geodesics on the Cigar Ricci-Bourguignon soliton are of two types:

\begin{enumerate}
\item \textbf{Radial geodesics} ($\ell=0$): These satisfy $\theta = \text{const}$ and
\[
s(\sigma) = \pm\sqrt{k}\,\sigma + s_0,
\qquad
r(\sigma) = \sqrt{E}\,\sinh(\pm\sqrt{k}\,\sigma + s_0).
\]
Every radial geodesic passes through the origin $r=0$.

\item \textbf{Non-radial geodesics} ($\ell\neq 0$, with $k>\ell^2$): These satisfy
\begin{equation}\label{eq:s-solution-main}
\cosh s(\sigma)
=
\sqrt{\frac{k}{k-\ell^2}}\,
\cosh\!\bigl(\sqrt{k-\ell^2}\,\sigma\bigr),
\end{equation}
with angular coordinate
\[
\dot{\theta} = \ell\coth^2 s.
\]
These geodesics have a unique turning point at $s_{\min}$ where $\dot{s}=0$, 
given by
\[
\coth^2 s_{\min} = \frac{k}{\ell^2}, 
\qquad
r_{\min} = \sqrt{E}\sinh s_{\min} = \sqrt{\frac{E(k-\ell^2)}{\ell^2}}.
\]
They escape to infinity in both directions: $r(\sigma) \to \infty$ as 
$\sigma \to \pm\infty$.
\end{enumerate}
\end{theorem}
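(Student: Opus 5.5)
The plan is to reduce everything to a one-dimensional problem for $s(\sigma)$ using the two first integrals, Proposition~\ref{prop:angular-momentum} and Proposition~\ref{prop:speed}. Then I split into the cases $\ell=0$ and $\ell\neq 0$ and integrate the resulting first-order ODE explicitly by a substitution adapted to $\psi=\tanh s$.

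\textbf{The two conservation laws.} From $\ell=\psi^2\dot\theta$ I get $\dot\theta=\ell/\tanh^2 s=\ell\coth^2 s$, which is the stated angular equation. It also gives $\psi^2\dot\theta^2=\ell^2\coth^2 s$, so the speed identity becomes \eqref{eq:radial-equation-s}.

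\textbf{Radial case $\ell=0$.} Here $\dot\theta\equiv 0$, so $\theta$ is constant. Equation \eqref{eq:geod-s-main} reduces to $\ddot s=0$, and together with $\dot s^2=k$ this gives $s=\pm\sqrt k\,\sigma+s_0$. Then $r=\sqrt E\sinh s$ follows from \eqref{eq:geodesic-polar-coord}. To justify that every radial geodesic passes through $r=0$, I would argue as follows. The affine function $s(\sigma)$ reaches $0$ at a finite parameter value. Since the chart $(s,\theta)$ is singular only at the tip, the geodesic continues there as a straight ray through the origin in the $(x,y)$-plane, with $\theta$ jumping by $\pi$. This is consistent with the reflection symmetry $(x,y)\mapsto(-x,-y)$ of the conformal metric $\frac{dx^2+dy^2}{E+x^2+y^2}$: its fixed lines are geodesics.

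\textbf{Non-radial case $\ell\neq 0$.} I rewrite the radial equation using $\coth^2 s=1+\sinh^{-2}s$:
\[
\dot s^2=(k-\ell^2)-\frac{\ell^2}{\sinh^2 s}.
\]
Since the left side is nonnegative, $k-\ell^2>\ell^2/\sinh^2 s>0$, so $k>\ell^2$ is forced. The key substitution is $c=\cosh s$. Then $\dot c=\sinh s\,\dot s$ and
\[
\dot c^{\,2}=(k-\ell^2)(c^2-1)-\ell^2=(k-\ell^2)c^2-k .
\]
This is a constant-coefficient equation of hyperbolic type. Its solutions, after translating $\sigma$ so the minimum occurs at $\sigma=0$, are
\[
c=\sqrt{k/(k-\ell^2)}\,\cosh\bigl(\sqrt{k-\ell^2}\,\sigma\bigr),
\]
which is \eqref{eq:s-solution-main}. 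To be safe I would verify directly that this function satisfies the second-order equation \eqref{eq:geod-s-main}, not only the squared first-order one. This rules out spurious constant solutions at the turning point.

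\textbf{Turning point and escape to infinity.} The minimum occurs where $\dot s=0$. This gives $\cosh^2 s_{\min}=k/(k-\ell^2)$, hence $\sinh^2 s_{\min}=\ell^2/(k-\ell^2)$ and $\coth^2 s_{\min}=k/\ell^2$. Uniqueness of the turning point follows because $\cosh(\sqrt{k-\ell^2}\,\sigma)$ has a single critical point. Escape follows because $\cosh s\to\infty$ as $\sigma\to\pm\infty$, so $r=\sqrt E\sinh s\to\infty$.

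\textbf{Main obstacle.} The step I expect to need the most care is the value of $r_{\min}$. Following the computation above gives $r_{\min}=\sqrt E\,\sinh s_{\min}=\sqrt{E\ell^2/(k-\ell^2)}$. I would recheck this carefully against the expression in the statement, since the two appear to be reciprocal inside the square root. The other delicate point is the rigorous treatment of the radial geodesics through the coordinate singularity at the tip. For that I would pass to the Cartesian picture rather than argue in the $(s,\theta)$ chart.
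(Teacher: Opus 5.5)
Your route is the paper's. You use the two first integrals to reduce to $\dot s^2=k-\ell^2\coth^2 s$, treat $\ell=0$ directly, and for $\ell\neq 0$ substitute $u=\cosh s$ to get $\dot u^{2}=(k-\ell^2)u^2-k$, which integrates to the stated formula for $\cosh s(\sigma)$. Your extra checks are correct and more careful than the paper's proof, which only says that the turning point is where $\dot s=0$. These are: that $k>\ell^2$ is forced, that the second-order equation excludes the constant solution at the turning point, and that the critical point is unique.

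Your suspicion about $r_{\min}$ is right, and you should state it as a correction rather than something to recheck. From $\coth^2 s_{\min}=k/\ell^2$ one gets $\sinh^2 s_{\min}=1/(\coth^2 s_{\min}-1)=\ell^2/(k-\ell^2)$. This agrees with $\cosh^2 s_{\min}=k/(k-\ell^2)$ read off from the explicit solution. Hence $r_{\min}=\sqrt{E\ell^2/(k-\ell^2)}$, whereas the printed value $\sqrt{E(k-\ell^2)/\ell^2}$ has the fraction inverted. For example, with $k=1$ and $\ell=\tfrac12$ one has $\tanh s_{\min}=\tfrac12$ and $\sinh s_{\min}=1/\sqrt3$, so $r_{\min}=\sqrt{E/3}$, not $\sqrt{3E}$. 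The printed formula would also send $r_{\min}\to\infty$ as $\ell\to 0$, whereas the geodesics then become radial and pass through the tip. The paper's proof never computes $r_{\min}$, so this clause of the statement is false as written and cannot be proved. What you prove is the corrected statement.

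There is one small slip in the radial case. The point reflection $(x,y)\mapsto(-x,-y)$ fixes only the origin. Curves that are merely invariant under an isometry need not be geodesics; circles about the origin are invariant too. Use instead the reflections across lines through the origin, whose fixed-point sets are totally geodesic. Alternatively, use that $(s,\theta)$ are geodesic polar coordinates centred at the tip, so a geodesic reaching the tip along $\theta=\theta_0$ leaves along $\theta=\theta_0+\pi$.
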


\begin{proof}
For $\ell=0$, equation \eqref{eq:radial-equation-s} gives $\dot{s}^2 = k$, 
yielding the radial case immediately.

For $\ell \neq 0$, using $\coth^2 s = 1 + \csch^2 s$ in \eqref{eq:radial-equation-s} gives
\[
\dot{s}^2 = (k-\ell^2) - \ell^2 \csch^2 s.
\]
The substitution $u = \cosh s$ leads to the standard integral
\[
\int \frac{du}{\sqrt{(k-\ell^2)u^2 - k}} = \pm \sigma + C,
\]
which yields \eqref{eq:s-solution-main} after choosing the origin of $\sigma$ 
appropriately. The turning point corresponds to $\dot{s}=0$.
\end{proof}

\vspace{0.1in}

\noindent
{\bf Key Observations:} The geodesic structure of the Cigar reflects its key geometric features:

\begin{itemize}

\item \textbf{Complete non-compactness:} All geodesics (except those starting at 
the tip) escape to infinity.

\item \textbf{Positive curvature:} The Gaussian curvature $ K = \frac{2E}{(E+r^2)^2}$ is 
everywhere positive but decays exponentially, preventing spiralling behaviour.

\item \textbf{Scattering geometry:} Non-radial geodesics approach a minimum 
radius $r_{\min}$, then recede to infinity, characteristic of scattering in a 
repulsive potential.
\end{itemize}

These properties distinguish the Cigar from both flat space (where geodesics 
are arbitrary straight lines with no special point) and compact positively 
curved spaces like the sphere (where geodesics are closed).\\

\noindent
{\bf Question:}
In view of the results obtained,  here we can ask: 
Do the results obtained here about mixed  Killing vector fields
 in dimension $2$ extend to higher dimensions? More generally, what is the structure of mixed Killing fields? Higher-dimensional gradient Ricci solitons? In particular, a Cigar Ricci-Bourguignon soliton in higher dimensions given by a rotationally symmetric metric:
\[
g_n = ds^2 + \tanh^2(s) \, g_{S^{n-1}},
\]
where $g_{S^{n-1}}$ is the round metric on $S^{n-1}$

\bibliographystyle{alpha}
	\bibliography{refs}
\end{document}